\begin{document}

\def\red{\textcolor{red}}

\theoremstyle{plain} \newtheorem{Thm}{Theorem}[section]
\newtheorem{prop}[Thm]{Proposition}
\newtheorem{lem}[Thm]{Lemma}
\newtheorem{step}{Step}
\newtheorem{de}{Definition}
\newtheorem{obs}{Observation}
\newtheorem{cor}[Thm]{Corollary}

\newcommand{\kor}{\mathring }
\newcommand{\Korz}{\mathring{\mathbb{Z}} }
\newcommand{\korx}{\mathring{x} }
\newcommand{\res}{\restriction }
\newcommand{\kalapz }{\hat{\mathbb{Z}} }
\theoremstyle{remark} \newtheorem*{pf}{Proof}
\renewcommand\theenumi{(\alph{enumi})}
\renewcommand\labelenumi{\theenumi}
\renewcommand{\qedsymbol}{}
\renewcommand{\qedsymbol}{\ensuremath{\blacksquare}}
\newcommand{\mcomment}[1]{\textcolor{red}{#1}}

\newcommand{\bigslant}[2]{{\raisebox{.2em}{$#1$}\left/\raisebox{-.2em}{$#2$}\right.}}

\newcommand{\rad}{{\sf Rad}}
\def\und{\underline}
\def\N{\mathbb{N}}
\def\Q{\mathbb{Q}}
\def\Z{\mathbb{Z}}
\def\C{\mathbb{C}}
\newcommand{\Ld}{|\!\!|}
\newcommand{\Rd}{|\!\!|}
\newcommand{\dsl}{/\!\!/}
\newcommand{\cala}{\mathcal }
\newcommand{\etha}{\alpha}
\newcommand{\embedding}{\hookrightarrow}
\newcommand{\semidirect}{\rtimes}
\newcommand{\sr}{S-ring }
\newcommand{\pp}{\mbox {\bf P}\,}
\newcommand{\pr}{\mbox {\bf Proof.\ }}
\newcommand{\aaa}{\mbox {\cala A}}
\newcommand{\pset}{\mbox {\{p_1,...,p_m\}}}
\newcommand{\zn}{\mbox {\bf Z}_n}
\newcommand{\pg}{permutation group }
\newcommand{\pgs}{permutation groups }
\newcommand{\srg}[3]{${\cala #1}=<{\underline #2}_0,...,{\underline
#2}_#3>$}
\newcommand{\isom}{\cong}
\newcommand {\sth}{ such that }
\newcommand {\imp}{ imprimitivity system }
\newcommand {\tram}{ $V(C_n,G_1)$ }
\newcommand {\pair}[1] {#1_1 #1_2^{-1}}
\newcommand {\cnp}{ C_{n/p} }
\newcommand {\normal}{\trianglelefteq}
\newcommand {\congruence} {\equiv}
\newcommand {\wreath}{\wr}
\newcommand {\longline}{\left|\right.}
\newcommand {\Sup}[1]{{\sf Sup}({#1})}
\newcommand {\Aut}[1]{{\sf Aut}({#1})}
\newcommand {\Out}[1]{{\sf Out}({#1})}
\newcommand {\Inn}[1]{{\sf Inn}({#1})}
\newcommand{\aalg}{{\sf Aut}_{alg}}
\newcommand {\New}[1]{{\sl #1}}
\newcommand {\Sym}[1]{{\sf Sym }(#1)}
\newcommand {\encodes}{\ encodes\ }
\newcommand {\tran}[1]{{#1}^*}
\newcommand {\twomin}{({\sf Sup}_2(H_R))^{min}}
\newcommand{\Log}[2]{{\mbox{\boldmath$\eta$\unboldmath}}_{#1}(#2)}
\newcommand {\Bsets}[1]{{\sf Bsets}(#1)}
\newcommand {\tranjugate}{tranjugate\ }
\newcommand {\cclosed}{ complete \ }
\newcommand{\CCI}{ {${\rm CI}^{(2)}$} }
\newcommand{\fp}{{\Zset}_p}
\newcommand{\HH}{{\sf H}}
\newcommand{\GG}{{\sf G}}
\newcommand{\m}[1]{\overline{#1}}
\newcommand{\Ind}{[\m{0},\m{p-1}]}
\newcommand{\cay}[2]{{\sf Cay}(#1,#2)}
\newcommand{\laa}{\langle\!\langle}
\newcommand{\raa}{\rangle\!\rangle}
\newcommand{\lla}{\langle\!\langle}
\newcommand{\rra}{\rangle\!\rangle}
\newcommand{\A}{{\cala A}}
\newcommand{\aut}[1]{{\sf Aut}(#1)}

\newcommand{\bsets}[1]{\Bsets{#1}}
\newcommand{\myplus}{\biguplus} %{\square\!\!\!\!\! {\small +}}%{*}
\newcommand{\Stab}{{\sf Stab}}
\newcommand{\spn}{{\sf Span}}
\newcommand{\Ss}{{\cala S}}
\newcommand{\mymid}{\,\,\vline\,\,}
\newcommand{\sfM}{{\sf M}}
\newcommand{\cM}{{\mathcal M}}
\newcommand{\F}{{\mathbb F}}
\newcommand{\cR}{{\mathcal R}}
\newcommand{\cP}{{\mathcal P}}
\newcommand{\cF}{{\mathcal F}}
\newcommand{\cL}{{\mathcal L}}
\newcommand{\cT}{{\mathcal T}}
\newcommand{\cB}{{\mathcal B}}
\newcommand{\cC}{{\mathcal C}}
\newcommand{\cD}{{\mathcal D}}
\newcommand{\e}{{\sf e}}
\newcommand{\fix}{{\sf Fix}}
\newcommand{\trace}{{\rm trace}}

\newcommand{\sym}[1]{{\sf Sym }(#1)}
\newcommand{\cel}{{\rm Cel}}
\newcommand{\lm}{\lambda}
\newcommand{\SP}{{\rm Sp}}
\newcommand{\ot}[1]{{\bf O}^\theta(#1)}
\newcommand{\wt}[1]{\widetilde{#1}}
\newcommand{\wh}[1]{\widehat{#1}}
\newcommand{\btwn}[2]{{}_{#1}\!S_{#2}}
\newcommand{\haho}{half-homogeneous}
\newcommand{\coco}{coherent configuration}
\newcommand{\cocos}{coherent configurations}
\newcommand{\irr}{{\rm Irr}}
\newcommand{\rk}{{\rm rk}}
\newcommand{\cS}{{\mathcal S}}
\newcommand{\ea}{\Z_2^3}
\newcommand{\sg}[1]{\langle{#1}\rangle}
\newcommand{\ovr}[1]{\overline{#1}}
\newcommand{\tr}{T}
\newcommand{\vpphi }{\varphi}
\newcommand{\ord}{{\rm ord}}
\newcommand{\cyc}{\ cyclotomic}
\newcommand{\cA}{{\mathcal A}}
\newcommand{\Ker}{{\rm Ker}}
\newcommand{\End}{{\rm End}}
\newcommand{\Fr}{{\rm Fr}}
\newcommand{\fq}{{\mathbb F}_q}
\newcommand{\sig}{\sigma }
\newcommand{\cN}{{\mathcal N}}
\newcommand{\veps}{\varepsilon}
\newcommand{\cd}{{\rm c.d.}}
\newcommand{\mtrx}[4]{\left(\begin{array}{cc} #1 & #2 \\ #3 & #4\end{array}\right)}
\newcommand{\irreps}{ irreducible representations }

\newcommand{\core}{{\sf core}}
\def\f{\EuScript}
\newcommand{\comment}[1]{}
\newcommand{\au}{{\sf Aut}}

\newcommand\restr[2]{\ensuremath{\left.#1\right|_{#2}}}

\def\Zq{\mathbb{Z}_q}
\def\Zp{\mathbb{Z}_p^3}
\def\Zpq{\mathbb{Z}_q \times \mathbb{Z}_p^3 }
\def\hatZpq{\hat{\mathbb{Z}}_q \times \hat{\mathbb{Z}}_p^3}
\def\ringZpq{\mathring{\mathbb{Z}}_q \times \mathring{\mathbb{Z}}_p^3}
\def\stb{,\ldots ,}
\def\bfs{\bfseries}

\title{The Cayley isomorphism property for Cayley maps}
\author{Mikhail Muzychuk, G\'abor Somlai }
\date{\today}
\maketitle

\begin{abstract} In this paper we study finite groups which have Cayley isomorphism property with respect to Cayley maps, CIM-groups for a brief. We show that the structure of the CIM-groups is very restricted. It is described in Theorem~\ref{111015a} where a short list of possible candidates for CIM-groups is given. Theorem~\ref{111015c} provides concrete examples of infinite series of CIM-groups.
\end{abstract}
\section{Introduction}

Let $H$ be a finite group and $S$ a subset of $H\setminus \{ 1\}$. A {\it Cayley (di)graph}
$\cay{H}{S}$ is defined by having the vertex set $H$ and $g$ is adjacent
to $h$ if and only if $g^{-1} h \in S$. The set $S$ is called the
{\it connection set} of the Cayley graph $\cay{H}{S}$. A Cayley graph
$\cay{H}{S}$ is {\it undirected} if and only if $S=S^{-1}$, where $S^{-1} =
\left\{ \, s^{-1} \in H \mid s \in S \, \right\} $.
Every left multiplication via elements of $H$ is an automorphism of
$\cay{H}{S}$, so the automorphism group of every Cayley graph over $H$
contains a regular subgroup isomorphic to $H$. Moreover, this property
characterises the Cayley graphs of $H$.
The group consisting of the elements of the left multiplications will be denoted by $\wh{H}$ and the left multiplication with $h \in H$ by $\wh{h}$ (that is $\wh{h}(x)=hx$).
Finally, a {\it Cayley map} $\cay{H,S}{\rho}$ is an undirected Cayley graph $\cay{H}{S}$ endowed with a cyclic ordering $\rho\in\sym{S}$ of the connection set.

%We will use certain properties of the underlying graph as the property of %the Cayley map. For example,
We say that a map $\cay{H,S}{\rho}$ is  {\it connected} if the underlying Cayley graph is connected, that is $\sg{S}=H$.

Using a less combinatorial approach, a Cayley map is a $2$-cell embedding of a Cayley graph into oriented surface with the same cyclic rotation around each vertex. For precise definiton of embedding graphs into orientable surfaces, see \cite{jonessingerman}.
%The underlying graph of a Cayley map is a Cayley graph, which is vertex-transitive.
Several different subclasses of Cayley maps have been investigated. The notion of a Cayley map first appeared in the paper of Biggs \cite{biggs} who investigated balanced Cayley maps. A Cayley map $\cay{H,S}{\rho}$ is called {\it balanced} if $\rho(s^{-1})= \rho(s)^{-1}$ and it is called {\it antibalanced} if
$\rho(s^{-1})= \rho^{-1}(s)^{-1}$. Further, a Cayley map $M$ is called {\it regular} if its automorphism group is transitive on the arcs as well. Following Jajcay and Siran \cite{jajcaysiran}, we say that for a group $H$ a permutation $\phi\in\sym{H}$  is a {\it skew-morphism} if there exists a mapping $\pi : H \mapsto \N$ such that $\phi(gh)=\phi(g) \phi^{\pi(g)}(h)$ for every $g,h \in G$.

Given two Cayley maps $M_1=\cay{H_1,S_1}{\rho_1}$ and $M_2=\cay{H_2,S_2}{\rho_2}$, a bijection $\phi:H_1\rightarrow  H_2$ is a {\it map isomorphism} from
$M_1$ to $M_2$ if $\phi$ is an isomorphism of the underlying Cayley graphs and for all $h \in H_1,s\in S_1$ it holds that $\phi(h)^{-1}\phi(h\rho_1(s)) = \rho_2(\phi(h)^{-1}\phi(hs))$. 
Denoting by $\Delta_h\phi$ the "differential" map $s\mapsto \phi(h)^{-1}\phi(hs), s\in S_1$ one can rewrite the latter condition as follows $(\Delta_h\phi) \rho_1 = \rho_2(\Delta_{\phi(h)}\phi)$. Notice that since $\phi$ is a graph isomorphism the map $\Delta_h\phi$ is a bijection between $S_1$ and $S_2$ for every $h\in H$. 

In what follows we say that $M_1$ and $M_2$ are {\it Cayley isomorphic} if there exists a group isomorphism $\phi:H_1\rightarrow H_2$ which is simulteneously a map isomorphism, that is $\phi(S_1)=S_2$ and
$\phi(\rho_1(s))=\rho_2(\phi(s))$ holds for each $s\in S_1$.

 The {\it automorphism group} of a Cayley map $M=\cay{H,S}{\rho}$ is the set of
all isomorphisms from $M$ to $M$ and it will be denoted by $\au(M)$.
It is clear that $\widehat{H}\leq\au(M)$. Thus $\au(\cay{H,S}{\rho})$ contains the regular subgroup $\wh{H}$.
Every group automorphism $\sigma\in\aut{H}$  induces Cayley isomorphism between the maps $\cay{H,S}{\rho}$ and $\cay{H,\sigma(S)}{\sigma' \rho (\sigma')^{-1}}$ where $\sigma'=\sigma|_S$ is the restriction of $\sigma$ on $S$. Thus a group automorphism $\sigma$ is an automorphism of a map $\cay{H,S}{\rho}$ if and only if $\sigma(S)=S$ and $\sigma|_S \rho = \rho \sigma|_S$. Since $\rho$ is a full cycle, the latter condition is equivalent to $\sigma|_S = \rho^k$ for some integer $k$.

The so-called {\it CI (Cayley isomorphism)} property of groups is well studied with respect to  Cayley graphs. A group $H$ is called a {\it CI-group} with respect to graphs (CIG-groups, for short) if two Cayley graphs of $H$ are isomorphic if and only if they are isomorphic by a group automorphism as well. A Cayley graph $\Gamma = \cay{H}{S}$ is called a {\it CI-graph} if every Cayley graph $\cay{H}{T}$ isomorphic $\Gamma$ is Cayley isomorphic to $\Gamma$.
For an old but excellent survey about CI-groups, see \cite{li} and further results can be found in \cite{liluppp}.
Similarly to the original definition of the CI property we say that a Cayley map $M= \cay{H,S}{\rho}$ is a {\it CI-map} of $H$ if every Cayley map $M'$ over $H$ isomorphic to $M$ is also Cayley isomorphic to $M$. We call a group $H$ a {\it CIM-group} if for every Cayley map $\cay{H,S}{\rho}$ is a CI-map.

A Cayley map $M=\cay{H,S}{\rho}$ can also be considered as a ternary relational structure on the vertices of the underlying graph. Three vertices $(x,y,z)$ are in the relation $\mathcal{R}$
if and only if $x^{-1}y, x^{-1}z\in S$ and $\rho(x^{-1}y) =x^{-1}z$. The automorphism group $\aut{M}$ consists of all those permutations of the vertices which preserves the relation $\mathcal{R}$. In particular, it is a $3$-closed permutation group.
This observation allows us to use the technique developed by Babai to solve problems concerning CIM-groups. Moreover a theorem of P\'alfy \cite{ppp} shows that the groups which are CI-groups for every $m$-ary relational structures are the cyclic groups of order $n$, where $(n,\phi(n))=1$ and the Klein group. P\'alfy also proved that if a group is not a CI-group with respect to some $m$-ary relation, then it is not a CI-group with respect to $4$-ary relational structures.

CI-groups with respect to ternary relations (CI$^{(3)}$-groups, for short) were investigated by Dobson \cite{dobson},\cite{dobson2} and later by Dobson and Spiga \cite{dobsonspiga}. Although the class of CI$^{(3)}$-groups is rather narrow, its full classification is not finished yet. The latest results may be found in \cite{dobson2} and \cite{dobsonspiga}. Since map automorphism group is $3$-closed, each CI$^{(3)}$-group is a CIM-group. The converse is not true. For example, every elementary abelian $2$-groups of rank at least $6$ is a CIM-group but not a CI$^{(3)}$-group.

As it was also pointed out by Dobson and Spiga \cite{dobsonspiga} every CI$^{(3)}$-group is also a CI$^{(2)}$-group, that  is a group which has a  the CI-property with respect to binary relational structures.
However, we will prove that there are CIM-groups which are not CI$^{(2)}$-groups. The Venn diagram below reflects the relationships between the three classes of CI-groups.

\begin{center}
\begin{tikzpicture}
\draw {(0.7,0) circle (0.5cm) node {CI$^{(3)}$}};
\draw {(0,0) circle (1.5cm)};
\draw {(0.0,1.0) node {CI$^{(2)}$}};
\draw {(2.0,0) ellipse (2cm and 1cm)  node {CIM}};
\end{tikzpicture}
\end{center}
%We only investigate a small class of ternary relational structures so we %obtain partial results about CI-groups with respect tor ternary relational %structures. However our results also show that there are only a few CI-%groups with respect to ternary relations, so we give a simplified proof %for several results obtained in \cite{dobsonspiga}.

Our first result formulates necessary conditions for being a CIM-group.
\begin{Thm}\label{111015a} Let $H$ be a CIM-group. Then $H$ is isomorphic to one of the following groups
\begin{enumerate}
\item $\Z_m\times \Z_2^r,\ \Z_m\times \Z_4,\ \Z_m\times \Z_8,\  \Z_m\times Q_8 $;
\item  $\Z_m\rtimes \Z_{2^e}, e=1,2,3$,
\end{enumerate}
where $m$ is an odd square-free number.
\end{Thm}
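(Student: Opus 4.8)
The plan is to argue via Babai's criterion in its map-theoretic form. Since a Cayley map $M=\cay{H,S}{\rho}$ is a $3$-closed ternary relational structure (through the relation $\cR$ recorded above), $M$ is a CI-map if and only if every regular subgroup of $\aut{M}$ isomorphic to $\wh{H}$ is conjugate to $\wh{H}$ inside $\aut{M}$. Hence, to show that a group $H$ is \emph{not} a CIM-group it suffices to produce one connection set $S$ and one full cycle $\rho\in\sym{S}$ whose map $M$ admits a second regular subgroup $\wt{H}\le\aut{M}$, isomorphic to $H$ but not conjugate to $\wh{H}$; such an $\wt H$ furnishes a map isomorphic to $M$ yet not Cayley isomorphic to it. I would organise the whole argument as a systematic exclusion: every group outside the list of Theorem~\ref{111015a} will be shown to carry such a witnessing map, after first reducing to a bounded family of small ``forbidden'' groups.

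The first step is a heredity lemma: if $H$ is a CIM-group and $K\le H$, then $K$ is a CIM-group. I would prove it by lifting a witnessing pair of maps over $K$ to a witnessing pair over $H$, spreading the connection set of $K$ over a transversal of $K$ in $H$ and extending $\rho$ so that the $\wh K$-map is recovered coset-by-coset. The delicate point is to complete $\rho$ to a single full cycle on the enlarged symmetric connection set while keeping the two lifted maps non-Cayley-isomorphic. Granting this, the theorem reduces to producing explicit non-CI maps on the minimal bad groups: $\Z_{p^2}$, $\Z_p\times\Z_p$ and the non-abelian $\Z_q\rtimes\Z_p$ for odd primes $p,q$; the $2$-groups $\Z_{16}$, $\Z_4\times\Z_2$, $\Z_8\times\Z_2$, $\Z_4\times\Z_4$, $D_8$, $Q_{16}$, $SD_{16}$; and the forbidden nontrivial actions of $\Z_2^r\ (r\ge 2)$ or of $Q_8$ on an odd cyclic group.

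For the odd part I would build, for each odd prime $p$, two Cayley maps on $\Z_{p^2}$ (and likewise on $\Z_p\times\Z_p$ and on $\Z_q\rtimes\Z_p$) that are abstractly isomorphic but not isomorphic by any group automorphism. The feature to exploit is that a group of odd order has no involutions, so the connection set decomposes into antipodal pairs $\{s,s^{-1}\}$ on which $\rho$ and $\rho^{-1}$ interchange roles; combined with the smallness of $\aut{\Z_{p^2}}$ and $\aut{\Z_p\times\Z_p}$ this leaves room for a relation-preserving bijection of the vertices that is not affine. Feeding these examples through the heredity lemma forces the odd Hall subgroup of $H$ to be abelian with every Sylow subgroup of prime order, i.e. cyclic and square-free, which yields the factor $\Z_m$.

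The $2$-part and the global assembly are where the real work lies. Excluding $\Z_{16}$ bounds the exponent of a cyclic Sylow $2$-subgroup by $8$; excluding $\Z_4\times\Z_2$, $\Z_8\times\Z_2$ and $\Z_4\times\Z_4$ removes every mixed abelian $2$-group other than $\Z_2^r$, $\Z_4$, $\Z_8$; and excluding $D_8$, $Q_{16}$, $SD_{16}$, while verifying that $Q_8$ survives, isolates $Q_8$ as the sole admissible non-abelian Sylow $2$-subgroup. Each exclusion again amounts to exhibiting a map whose automorphism group carries two non-conjugate regular copies of the group, but now $\rho$ must be chosen compatibly with the involutions of the group and with the constraint $S=S^{-1}$: one must interleave the antipodal pairs and the involutions into a single full cycle so that a prescribed non-affine bijection still preserves $\cR$. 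Finally I would combine the admissible odd factor $\Z_m$ with the admissible Sylow $2$-subgroup and analyse the possible extensions, showing that a Sylow $2$-subgroup equal to $\Z_2^r\ (r\ge 2)$ or to $Q_8$ can occur only as a direct factor (giving item (a)), whereas a nontrivial action on $\Z_m$ is admissible precisely for a cyclic $\Z_{2^e}$ with $e\le 3$ (giving item (b)), every other action being killed by an explicit forbidden map. The principal obstacle, as I see it, is exactly this $2$-group analysis together with the verification of non-conjugacy of the two regular subgroups: the simultaneous demands that $\rho$ be a full cycle and be compatible with the involution structure leave very little freedom and appear to require a case-by-case treatment.
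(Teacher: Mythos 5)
There is a genuine gap, and it is fatal to the program as described. Your final ``assembly'' step silently assumes that the odd Hall subgroup of $H$ is normal, i.e.\ that $H$ is an extension of $\Z_m$ by its Sylow $2$-subgroup acting on $\Z_m$. Nothing in your reduction establishes this, and your list of forbidden configurations only covers actions of $2$-groups on odd cyclic groups, never the opposite direction, in which an element of odd order acts nontrivially on the Sylow $2$-subgroup. Concretely, $A_4\cong \Z_2^2\rtimes\Z_3$ passes every test you set up: all of its proper subgroups ($\Z_2^2$, $\Z_3$, $\Z_2$) lie in the allowed list, and $A_4$ itself is not among your minimal bad groups; yet $A_4$ has no normal odd Hall subgroup and so is not of either form in the statement. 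Hence your argument cannot exclude $A_4$ (nor, e.g., $Q_8\rtimes\Z_3$). The paper closes exactly this hole with transfer-type theorems: Burnside's and Frobenius' normal $p$-complement theorems reduce everything to showing no element of odd prime order acts nontrivially on the Sylow $2$-subgroup (and similarly among the odd Sylow subgroups), and any such action yields a subgroup $\sg{g}T_1$ satisfying the hypotheses of Lemma~\ref{frob}, hence not a connected CIM-group. These normal-complement arguments (Steps 1 and 2 of the paper's proof) are the structural backbone of the theorem and are entirely absent from your proposal.

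Two further problems. First, your heredity lemma is both stronger than what these methods give and rests on an unresolved construction: the paper proves only that a subgroup $K$ of a CIM-group is a \emph{connected} CIM-group, and does so without touching $\rho$ at all --- it keeps the connection set $S\subseteq K$ and views the two maps as disconnected maps over the big group, whose components are coset copies of the $K$-maps; a Cayley isomorphism of the big maps then fixes $K=\sg{S}$ setwise and restricts to it. Your plan to spread $S$ over a transversal and complete $\rho$ to a full cycle is precisely the step you call ``delicate,'' and there is no evident way to carry it out so that isomorphism and non-Cayley-isomorphism are both preserved; indeed the full (disconnected) heredity statement is doubtful, since an automorphism of $H$ carrying $S_1$ to $S_2$ only restricts to an isomorphism $\sg{S_1}\to\sg{S_2}$, which need not extend to an automorphism of $K$. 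Second, for the mixed $2$-groups ($\Z_4\times\Z_2$, $D_8$, $\Z_8\times\Z_2$, $SD_{16}$, \dots) the paper constructs no per-group non-CI maps at all, and it is never shown that these groups fail to be \emph{connected} CIM-groups; instead it compares maps over two \emph{different} subgroups --- the $4$-cycle maps over $C_4$ and over the Klein group are isomorphic as maps, so a CIM-group of exponent greater than $2$ cannot contain both, and Burnside's classification of $2$-groups without a Klein subgroup (cyclic or generalized quaternion) finishes the job, with $Q_{16}$ eliminated by the analogous $Q_8$-versus-$\Z_8$ comparison. Your per-group Babai constructions would therefore need either connected witnessing maps (not known to exist for these groups) or the full heredity you have not established.
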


The second main result provides several infinite series of  CIM-groups.
\begin{Thm}\label{111015c} The following groups are CI-groups with respect to Cayley maps.
$$
\Z_m\times \Z_2^r,\ \Z_m\times \Z_4,\  \Z_m\times Q_8
$$
where $m$ is an odd square-free number.
\end{Thm}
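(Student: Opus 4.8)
The plan is to verify Babai's criterion: the Cayley map $M=\cay{H,S}{\rho}$ is a CI-map precisely when every regular subgroup of $G:=\aut{M}$ isomorphic to $H$ is conjugate to $\wh H$ in $G$. A disconnected map splits into isomorphic connected components indexed by the cosets of $\sg{S}$, and this situation reduces to the connected one, so I assume $\sg{S}=H$. The whole argument then rests on a structural lemma for $G$. Applying the differential condition $(\Delta_h\phi)\rho=\rho(\Delta_{\phi(h)}\phi)$ to an automorphism $\phi$ fixing the vertex $1$ shows that $\phi|_S$ commutes with the full cycle $\rho$, hence $\phi|_S=\rho^k$; a short connectivity induction then gives that $G$ acts freely on the darts of $M$. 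Consequently $G_1$ acts freely on $S$ and embeds into $\sg{\rho}$, so $G_1$ is cyclic with $|G_1|$ dividing $|S|$, and $G=\wh H\,G_1$ with $|G|=|H|\cdot|G_1|$.

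The decisive feature of the groups in the list is that each of $\Z_m\times\Z_2^r$, $\Z_m\times\Z_4$ and $\Z_m\times Q_8$ is nilpotent, being the direct product of an abelian group of odd order $m$ with a $2$-group. Writing $G=\wh H\,G_1$ as a product of the nilpotent subgroup $\wh H$ and the cyclic subgroup $G_1$, the Kegel--Wielandt theorem forces $G$ to be solvable; this is what opens up Hall theory. Now let $R\le G$ be a second regular subgroup with $R\cong H$. Because $H$ is nilpotent of order $2^k m$ with $m$ odd, both $R$ and $\wh H$ split canonically as $R=R_2\times R_{2'}$ and $\wh H=\wh P\times\wh{\Z_m}$ into their Sylow $2$- and Hall $2'$-parts, with $R_2\cong \wh P\cong P$ and $R_{2'}\cong\wh{\Z_m}\cong\Z_m$. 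The goal is to find one element $g\in G$ that conjugates $R_{2'}$ onto $\wh{\Z_m}$ and $R_2$ onto $\wh P$ simultaneously.

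I would treat the odd part first. Conjugacy of Hall $2'$-subgroups in the solvable group $G$ lets me move $R_{2'}$ and $\wh{\Z_m}$ into a common Hall $2'$-subgroup $W$, where both copies of $\Z_m$ are semiregular with the cosets of $\Z_m$ as their shared orbits. Here the hypothesis that $m$ is square-free is indispensable: it makes the $\Z_m$-structure rigid, so that $R_{2'}$ can be conjugated onto $\wh{\Z_m}$ while keeping track of the relevant normaliser, leaving exactly the room needed to adjust the $2$-parts afterwards. With the odd parts aligned, the problem reduces to showing that two regular copies $R_2,\wh P$ of $P$, now lying in a common Sylow $2$-subgroup $Q$, are conjugate by an element normalising $\wh{\Z_m}$; this is done separately for $P=\Z_2^r$, $P=\Z_4$ and $P=Q_8$, using once more that $Q$ induces only a cyclic group on the darts.

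The main obstacle is this $2$-part analysis together with the bookkeeping that glues it to the odd part. For $\Z_2^r$ with large $r$ the Sylow $2$-subgroup of $G$ can be complicated, and it is precisely the cyclic point-stabiliser coming from the map structure --- not available for arbitrary $3$-ary relations --- that rules out non-conjugate regular elementary abelian subgroups; this is exactly why these groups are CIM but, as noted in the introduction, not CI$^{(3)}$. The quaternion case $Q_8$ demands a more delicate argument because of its non-abelian relations and its unique involution. A further difficulty is that $G$ need not decompose as a direct product across the prime $2$, so the odd and even conjugations cannot be carried out independently; controlling the normaliser in the odd step is what guarantees they combine into a single $g\in G$. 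Once all regular subgroups isomorphic to $H$ are shown conjugate to $\wh H$, Babai's criterion completes the proof.
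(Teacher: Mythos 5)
Your preliminaries are sound and match the paper's: Babai's criterion, the reduction to connected maps, Jajcay's structure lemma ($G_1$ faithful on $S$, $(G_1)|_S\leq\sg{\rho}$, hence $G=\wh{H}G_1$ with $G_1$ cyclic), and solvability of $G$ via Ito respectively Kegel--Wielandt. But after that point your text is a plan, not a proof, and the plan's decisive steps are exactly the ones you leave unestablished. You never prove (i) that after moving $R_{2'}$ and $\wh{\Z_m}$ into a common Hall $2'$-subgroup they can be conjugated onto each other ``while keeping track of the relevant normaliser'' --- the square-free ``rigidity'' you invoke is asserted, not argued, and your claim that the two copies of $\Z_m$ share the same orbits in $W$ is unjustified (two semiregular subgroups of the same order in a common Hall subgroup need not induce the same partition of $\Omega$); (ii) the case-by-case conjugation of the $2$-parts for $\Z_2^r$, $\Z_4$, $Q_8$; and (iii) the gluing, i.e.\ why an element conjugating the odd parts can be corrected inside the normaliser of the aligned odd part so as to also conjugate the $2$-parts. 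Point (iii) is the genuinely hard simultaneous-conjugation problem; saying that ``controlling the normaliser \ldots is what guarantees they combine'' names the difficulty rather than resolves it. You even flag (ii) and (iii) yourself as ``the main obstacle.''

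The paper avoids this two-step Hall strategy entirely. It proves the stronger Theorem~\ref{100915c} (any transitive $G$ with cyclic point stabilizer containing a regular $H\in\mathscr{M}$ has all $H$-regular subgroups conjugate) by induction on $|G|$, taking a minimal counterexample and analyzing imprimitivity systems: for every proper block system $\mathcal{D}$ one gets $G^{\mathcal{D}}=H^{\mathcal{D}}$ (Proposition~\ref{170915a}), the minimal block system is unique (Proposition~\ref{170915b}), and fixed-point sets of subgroups of the cyclic stabilizer are blocks (Proposition~\ref{170915c}); Hall's theorem enters only once, to show $H_{p'}=G_{p'}=\mathbf{O}_{p'}(G)\neq 1$, whose orbits then form a block system with block size coprime to $p$, contradicting uniqueness of the minimal system whose blocks have $p$-power size. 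Note also that the paper's reduction to the connected case (Proposition~\ref{071215b}) genuinely uses the special properties of $\mathscr{M}$ (subgroups of equal order are $\aut{H}$-conjugate and subgroup automorphisms extend to $H$), which your one-sentence reduction glosses over. As it stands, your proposal identifies the correct framework but has a gap precisely at the theorem's core.
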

As an immediate corollary of the above Theorems we obtain the following criterion.
\begin{Thm}\label{thm1}
A group $H$ of odd order is a  CIM-group if and only if $H$ is a cyclic group of a square free order.
\end{Thm}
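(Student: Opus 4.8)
The plan is to obtain the statement as a direct specialization of the two main theorems to the odd-order case, with no fresh argument about maps required. The entire content is arithmetic bookkeeping on the orders of the groups appearing in the classification of Theorem~\ref{111015a} and the construction list of Theorem~\ref{111015c}; I would organize the proof as the two implications of the equivalence.

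For the forward implication, I would assume that $H$ is a CIM-group of odd order and invoke Theorem~\ref{111015a}, which forces $H$ to be isomorphic to one of the explicitly listed candidates with $m$ odd and square-free. I would then pass through the list and record the $2$-part of each family: the groups $\Z_m\times\Z_4$, $\Z_m\times\Z_8$, $\Z_m\times Q_8$, and each $\Z_m\rtimes\Z_{2^e}$ with $e\in\{1,2,3\}$ all carry a nontrivial cyclic or quaternion $2$-factor, hence have even order, and are therefore incompatible with $|H|$ odd. The sole remaining family is $\Z_m\times\Z_2^r$, whose order $m\cdot 2^r$ is odd exactly when $r=0$. Thus $H\cong\Z_m$ with $m$ odd square-free, i.e. $H$ is cyclic of square-free order.

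For the converse, suppose $H$ is cyclic of square-free order; being of odd order this means $H\cong\Z_m$ for some odd square-free $m$. Setting $r=0$ in the first family of Theorem~\ref{111015c} identifies $\Z_m\times\Z_2^r=\Z_m$ as a CIM-group, which closes the equivalence. The only genuine care-point, and the closest thing to an obstacle, is the parity bookkeeping in the forward direction: one must verify that every family in Theorem~\ref{111015a} other than $\Z_m\times\Z_2^r$ contributes an even factor, and that $\Z_m\times\Z_2^r$ has odd order precisely in the degenerate case $r=0$. Once this is confirmed, both directions reduce to a one-line appeal to the respective main theorem.
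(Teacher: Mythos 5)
Your proposal is correct and coincides with the paper's own treatment: the paper states Theorem~\ref{thm1} as an immediate corollary of Theorems~\ref{111015a} and~\ref{111015c}, obtained exactly by your parity argument (odd order rules out every family in the classification except $\Z_m\times\Z_2^r$ with $r=0$, and the converse is the $r=0$ case of Theorem~\ref{111015c}). No further argument is needed, and none is given in the paper.
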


Notice that obtained results do not provide a complete classification of cyclic CIM-group. This is because we do not know which of the groups $\Z_m\times\Z_8$, $m$ is odd and square-free, are CIM-groups.
Proposition~\ref{111015b} shows that $\Z_8$ is a CIM-group. We believe that all groups of the above structure have the CIM-property.

Our paper is organised as follows. In Section \ref{sectiongeneral} we collect a few general results about CI-property which will be used later. In Section \ref{sectionnegative} we characterize Sylow subgroups of CIM-groups. Section \ref{sectionpositive} is devoted to the proof of Theorem~\ref{111015a}. The last section provides proofs of  Theorems~\ref{111015c} and \ref{thm1}.

Most of the group-theoretical notation used in the paper are standard  and can be found in~\cite{W64}.

\section{General observations}\label{sectiongeneral}
The original CI property for graphs is inherited by subgroups which gives us a strong  tool to determine the list of possible CI-groups. Similar, but a weaker, property holds for CIM-groups as well. Let us call a group $H$ to be a {\it connected} CIM-group if it is a CI-group with respect to connected maps.
\begin{lem}\label{lemsubgroup}
Every subgroup of a CIM-group is a connected CIM-group.
\end{lem}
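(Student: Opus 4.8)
The plan is to reduce the connected CIM property of the subgroup $K\le H$ to the full CIM property of the overgroup $H$: I would lift a connected Cayley map of $K$ to a Cayley map of $H$, apply the hypothesis that $H$ is a CIM-group, and then descend the resulting group automorphism back to $K$. Concretely, let $M=\cay{K,S}{\rho}$ be a connected Cayley map of $K$ (so $\sg{S}=K$) and let $M'=\cay{K,T}{\sigma}$ be any Cayley map of $K$ that is map-isomorphic to $M$, say via a bijection $\psi\colon K\to K$. I must produce a group automorphism of $K$ that is simultaneously a map isomorphism $M\to M'$.

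First I would lift. Since $S,T\subseteq K\subseteq H$, the pairs $(S,\rho)$ and $(T,\sigma)$ also define Cayley maps $\widetilde{M}=\cay{H,S}{\rho}$ and $\widetilde{M}'=\cay{H,T}{\sigma}$ of $H$. Because $S\subseteq K$, an edge of $\cay{H}{S}$ joins two vertices only when they lie in a common left coset of $K$; hence $\widetilde{M}$ is the disjoint union of $[H:K]$ isomorphic copies of $M$, one on each coset, and likewise for $\widetilde{M}'$. Fixing a transversal of $K$ in $H$ and writing each element of $H$ uniquely as $gk$ with $g$ in the transversal and $k\in K$, I would define $\Phi\colon H\to H$ cosetwise by $\Phi(gk)=g\,\psi(k)$. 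A direct check shows $\Phi$ is a map isomorphism $\widetilde{M}\to\widetilde{M}'$: for $x=gk$ and $s\in S$ both $xs$ and $x\rho(s)$ remain in the coset $gK$, so $\Phi(x)^{-1}\Phi(xs)=\psi(k)^{-1}\psi(ks)$ and $\Phi(x)^{-1}\Phi(x\rho(s))=\psi(k)^{-1}\psi(k\rho(s))$, whence the defining identity $\Phi(x)^{-1}\Phi(x\rho(s))=\sigma\bigl(\Phi(x)^{-1}\Phi(xs)\bigr)$ collapses to exactly the map-isomorphism identity already known for $\psi$.

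Now I would invoke the hypothesis: since $H$ is a CIM-group and $\widetilde{M}\cong\widetilde{M}'$, these maps are Cayley isomorphic, so there is $\alpha\in\aut{H}$ with $\alpha(S)=T$ and $\alpha(\rho(s))=\sigma(\alpha(s))$ for all $s\in S$. It remains to descend $\alpha$ to $K$. Here connectivity is essential: $M$ is connected, so $\sg{S}=K$, and since $M'$ is graph-isomorphic to $M$ it is connected as well, giving $\sg{T}=K$. Consequently $\alpha(K)=\alpha(\sg{S})=\sg{\alpha(S)}=\sg{T}=K$, so $\alpha$ restricts to an automorphism $\alpha|_K\in\aut{K}$. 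This restriction satisfies $\alpha|_K(S)=T$ and $\alpha|_K(\rho(s))=\sigma(\alpha|_K(s))$, i.e. it is a Cayley isomorphism $M\to M'$, which proves the lemma.

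The one genuinely delicate point is this last descent step. A Cayley isomorphism produced by the CIM property of $H$ is an automorphism of the \emph{whole} group $H$, and there is no a priori reason for it to stabilize the subgroup $K$; the verification that it does rests entirely on recovering $K$ as $\sg{S}=\sg{T}$, which is forced to be $\alpha$-invariant. This is also precisely why the argument yields only the \emph{connected} CIM property for subgroups: if $\sg{S}$ were a proper subgroup of $K$ we would lose control of the image $\alpha(K)$ and could no longer guarantee that $\alpha$ restricts to an automorphism of $K$.
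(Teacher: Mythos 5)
Your proof is correct and follows essentially the same route as the paper: lift the two maps on $K$ to Cayley maps of $H$ (disjoint unions of copies on left cosets, with your explicit $\Phi(gk)=g\psi(k)$ being exactly the paper's cosetwise isomorphism $\wh{g}\,\psi\,\wh{g}^{-1}$ assembled over a transversal), invoke the CIM property of $H$, and descend the resulting automorphism using connectivity. The only cosmetic difference is in the descent step, where you argue via $\alpha(\sg{S})=\sg{T}$ while the paper argues that $\alpha(K)$ is both a subgroup and a connected component (hence a left coset of $K$ containing the identity); these are equivalent.
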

\begin{proof}
Let $G$ be a CIM-group and $H \le G$. Let us assume that $\cay{H,S}{\rho}$ and $\cay{H,S'}{\rho'}$ are isomorphic connected Cayley maps of $H$.
 Let $f$ be a map isomorphism from $\cay{H,S}{\rho}$ to $\cay{H,S'}{\rho'}$. Then $\wh{g_2} f \wh{g_1}^{-1}$ is an isomorphism between the connected component  of $\cay{G,S}{\rho}$ on $g_1H$ and the one $\cay{G,S'}{\rho'}$ on $g_2H$. This shows that the connected components of $\cay{G,S}{\rho}$ and $\cay{G,S'}{\rho'}$ are isomorphic. Therefore $\cay{G,S}{\rho}$ and $\cay{G',S'}{\rho'}$ are isomorphic Cayley maps.
Since $G$ is a CIM-group there exists $\alpha \in \au(G)$, which is an isomorphism from $\cay{G,S}{\rho}$ to $\cay{G,S'}{\rho'}$. Since the  Cayley map $\cay{H,S}{\rho}$ is a connected component of $\cay{G,S}{\rho}$, its image $\cay{\alpha(H),\alpha(S)}{\alpha|_S\rho(\alpha|_S)^{-1}}$ is a connected component of $\cay{G,S'}{\rho'}$. Therefore $\alpha(H)$ is a left coset of $H$ implying $\alpha(H)=H$. Hence $\alpha|_H$ is a Cayley isomorphism between the above maps.
\end{proof}
This result suggests that it is worth investigating $p$-groups which arise as the Sylow $p$-subgroups of finite groups.

Another important observation is that if $\cay{H,S}{\rho}$ is a Cayley map with $|S| \le 2$, then the Cayley graph $\cay{H,S}$ has to be a CI-graph since there exists only one cyclic ordering on one or two elements. This shows that the automorphism group of a CIM-group $H$  has only one orbit on the elements of order $2$ and for every $g,h \in H$ with the same order there exists $\alpha \in \au(H)$ with $\alpha(g) =h$ or $\alpha(g) =h^{-1}$. Groups having this property were investigated by Li and Praeger \cite{lipraeger}.

The following lemma is due to Babai \cite{babai} and applies for every Cayley relational structures.
\begin{lem}[Babai]\label{babai}
Let $\cay{H}{\mathcal{R}}$ be a Cayley relational structure. Then $\cay{H}{\mathcal{R}}$ has the CI-property if and only if for every regular subgroup $\mathring{H}$ of $\au(\cay{G}{\mathcal{R}})$ there exists $\alpha \in \au(\cay{G}{\mathcal{R}})$ with $\alpha(\mathring{H}) = \wh{H}$.
\end{lem}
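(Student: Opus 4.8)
The plan is to translate both the CI-property and the stated conjugacy condition into statements about regular subgroups of $\au(\cay{H}{\mathcal{R}})$ sitting inside $\sym{H}$, and then to pass between them using two standard structural facts about the symmetric group: first, that any two regular subgroups of $\sym{H}$ abstractly isomorphic to $H$ are conjugate in $\sym{H}$ (both being permutation-equivalent to the left regular representation $\wh{H}$); and second, that the normaliser $N_{\sym{H}}(\wh{H})$ is the holomorph $\wh{H}\rtimes\aut{H}$, so that a permutation of $H$ which normalises $\wh{H}$ and fixes the identity is precisely a group automorphism. Throughout, $\mathring{H}$ denotes a regular subgroup \emph{isomorphic to} $H$ (otherwise it could never be conjugated onto $\wh{H}$, so this hypothesis is needed for the criterion to be correct). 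I would also record the bookkeeping dictionary: a bijection $f\in\sym{H}$ carrying $\mathcal{R}$ to another relational structure $\mathcal{R}'$ satisfies $f\,\au(\cay{H}{\mathcal{R}'})\,f^{-1}=\au(\cay{H}{\mathcal{R}})$, and $\cay{H}{\mathcal{R}'}$ is again a Cayley structure over $H$ exactly when $\wh{H}\le\au(\cay{H}{\mathcal{R}'})$.

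For the direction assuming the CI-property, I start with a regular subgroup $\mathring{H}\le\au(\cay{H}{\mathcal{R}})$ with $\mathring{H}\cong H$ and use the permutation-equivalence fact to choose $f\in\sym{H}$ with $f^{-1}\mathring{H}f=\wh{H}$. Pulling $\mathcal{R}$ back along $f$ produces a structure $\mathcal{R}'$ with $f\colon\cay{H}{\mathcal{R}'}\to\cay{H}{\mathcal{R}}$ an isomorphism and $\wh{H}=f^{-1}\mathring{H}f\le\au(\cay{H}{\mathcal{R}'})$, so $\cay{H}{\mathcal{R}'}$ is a Cayley structure over $H$ isomorphic to $\cay{H}{\mathcal{R}}$. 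The CI-property then supplies $\sigma\in\aut{H}$ realising this isomorphism, i.e. $\sigma\colon\cay{H}{\mathcal{R}'}\to\cay{H}{\mathcal{R}}$. Setting $\alpha:=\sigma f^{-1}$, I check that $\alpha$ maps $\mathcal{R}$ to $\mathcal{R}'$ and back to $\mathcal{R}$, so $\alpha\in\au(\cay{H}{\mathcal{R}})$; and since any $\sigma\in\aut{H}$ normalises $\wh{H}$ (as $\sigma\wh{h}\sigma^{-1}=\wh{\sigma(h)}$), I obtain $\alpha\mathring{H}\alpha^{-1}=\sigma(f^{-1}\mathring{H}f)\sigma^{-1}=\sigma\wh{H}\sigma^{-1}=\wh{H}$, which is the required conjugacy.

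For the converse, suppose $\cay{H}{\mathcal{R}}\cong\cay{H}{\mathcal{R}'}$ via a bijection $f\in\sym{H}$ carrying $\mathcal{R}$ to $\mathcal{R}'$, where $\cay{H}{\mathcal{R}'}$ is also a Cayley structure over $H$. Then $f^{-1}\wh{H}f$ is a regular subgroup isomorphic to $H$, and because $f^{-1}\au(\cay{H}{\mathcal{R}'})f=\au(\cay{H}{\mathcal{R}})$ it lies in $\au(\cay{H}{\mathcal{R}})$. The hypothesis yields $\alpha\in\au(\cay{H}{\mathcal{R}})$ with $\alpha(f^{-1}\wh{H}f)\alpha^{-1}=\wh{H}$; equivalently $g:=f\alpha^{-1}$ normalises $\wh{H}$ while still carrying $\mathcal{R}$ to $\mathcal{R}'$. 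Writing $g=\wh{a}\,\sigma$ with $\wh{a}\in\wh{H}$ and $\sigma\in\aut{H}$ from the holomorph description of $N_{\sym{H}}(\wh{H})$, and absorbing the translation $\wh{a}\in\wh{H}\le\au(\cay{H}{\mathcal{R}'})$, the map $\sigma=\wh{a}^{-1}g$ is a group automorphism which is simultaneously an isomorphism $\cay{H}{\mathcal{R}}\to\cay{H}{\mathcal{R}'}$. Hence the two structures are Cayley isomorphic, establishing the CI-property.

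The two directions are otherwise symmetric bookkeeping; the conceptual weight rests entirely on the two group-theoretic facts. The step most deserving of care is the normaliser identification $N_{\sym{H}}(\wh{H})=\wh{H}\rtimes\aut{H}$: this is exactly what converts a \emph{permutational} conjugacy living inside $\au(\cay{H}{\mathcal{R}})$ into a genuine \emph{group} automorphism, and together with the fact that $\wh{H}$ already lies in the automorphism group of every Cayley structure (so the translation factor $\wh{a}$ may be freely discarded) it is what upgrades an abstract isomorphism of relational structures to a Cayley isomorphism. I would also take care to state explicitly the standing assumption that $\mathring{H}\cong H$, since without it the equality $\alpha\mathring{H}\alpha^{-1}=\wh{H}$ is impossible and the criterion fails.
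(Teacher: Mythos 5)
Your proof is correct and follows exactly the classical route: the paper states this lemma without proof (it is quoted from Babai \cite{babai}), and the standard argument is precisely yours --- conjugate the $H$-regular subgroup $\mathring{H}$ onto $\wh{H}$ inside $\sym{H}$, pull the relational structure back along that conjugating bijection to obtain an isomorphic Cayley structure, and in the converse direction use the holomorph identification $N_{\sym{H}}(\wh{H})=\wh{H}\rtimes\aut{H}$ together with absorbing the translation part into $\wh{H}\leq\au(\cay{H}{\mathcal{R}'})$ to upgrade a normalising permutation to a group automorphism. Your explicit insistence that $\mathring{H}$ be isomorphic to $H$ is also the paper's intended reading, since immediately after the lemma it fixes the terminology ``$H$-regular subgroup'' for exactly this notion.
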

In what follows we refer to a regular permutation subgroup isomorphic to $H$ as $H$-{\it regular}  subgroup.

The statement below describes the structure of the Cayley map automorphism group. Although it was proven by Jajcay \cite{jajcay} we prefer to provide its proof here to make the paper self-contained.
\begin{lem}\label{310116a} Let $M:=\cay{H,S}{\rho}$ be a connected 
Cayley map and $G:=\aut{M}$ its automorphism group.
Then $G_e$ acts faithfully on $S$ and its restriction $(G_e)|_S$ is contained in $\sg{\rho}$. In particular, $G_e$ is cyclic.
\end{lem}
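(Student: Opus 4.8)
The plan is to analyse how an element $\phi\in G_e$ acts on the neighbourhood of the fixed vertex $e$, and then to use connectedness to propagate information from $e$ to the whole graph. First I would observe that the neighbours of $e$ in the underlying graph are exactly the elements of $S$, so every $\phi\in G_e$ permutes $S$, and its restriction $\phi|_S=\Delta_e\phi$ is a well-defined element of $\Sym{S}$ (indeed $\Delta_e\phi(s)=\phi(e)^{-1}\phi(es)=\phi(s)$ since $\phi(e)=e$). This produces the restriction homomorphism $G_e\to\Sym{S}$, $\phi\mapsto\phi|_S$, whose kernel and image are precisely what the statement asks us to control.

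For the containment in $\sg{\rho}$, I would apply the defining identity of a map automorphism at $h=e$: the condition $\phi(h)^{-1}\phi(h\rho(s))=\rho(\phi(h)^{-1}\phi(hs))$ at $h=e$ reads $\phi|_S\circ\rho=\rho\circ\phi|_S$, i.e. $\phi|_S$ centralises $\rho$ in $\Sym{S}$. Since $\rho$ is a single full cycle on $S$, its centraliser in $\Sym{S}$ equals $\sg{\rho}$; hence $(G_e)|_S\subseteq\sg{\rho}$. The same identity at an arbitrary $h$ shows more generally that $\Delta_h\phi\in\sg{\rho}$ for every $h\in H$ (using that $\Delta_h\phi$ is a bijection of $S$), a fact I will reuse below.

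The heart of the argument, and the step where connectedness is essential, is faithfulness: I must show that $\phi\in G_e$ with $\phi|_S=\mathrm{id}$ forces $\phi=\mathrm{id}$. I would prove the propagation claim: \emph{if $\phi(x)=x$ and $\phi$ fixes at least one neighbour of $x$, then $\phi$ fixes every neighbour of $x$.} Indeed, $\phi(x)=x$ gives $\Delta_x\phi\in\sg{\rho}$, say $\Delta_x\phi=\rho^{k}$; if $\phi$ fixes the neighbour $xt$ with $t\in S$, then $\rho^{k}(t)=t$, and because $\sg{\rho}$ acts regularly on $S$ (being generated by a full cycle) this forces $\rho^{k}=\mathrm{id}$, whence $\Delta_x\phi=\mathrm{id}$ and $\phi(xs)=x\,\Delta_x\phi(s)=xs$ for all $s\in S$. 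With this claim in hand I would induct on the graph distance $d(e,\cdot)$: the base case is given, as $\phi$ fixes $e$ and all of $S$; and if $\phi$ fixes all vertices at distance $\le d$, then any vertex $x$ at distance $d$ has a fixed neighbour at distance $d-1$, so the claim shows $\phi$ fixes every neighbour of $x$. Since each vertex at distance $d+1$ is adjacent to some vertex at distance $d$, connectedness ($\sg{S}=H$) propagates the fixing to all of $H$, giving $\phi=\mathrm{id}$.

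Finally, combining the two parts, the restriction map $G_e\to\Sym{S}$ is injective with image contained in the cyclic group $\sg{\rho}$, so $G_e\cong(G_e)|_S$ is itself cyclic, as required. I expect the faithfulness step to be the main obstacle, since it is the only place that genuinely uses the connectedness hypothesis together with the regularity of the cyclic group generated by a full cycle; by contrast, the inclusion $(G_e)|_S\subseteq\sg{\rho}$ follows directly from the map-automorphism identity and the standard description of the centraliser of a full cycle.
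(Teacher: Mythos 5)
Your proof is correct and follows essentially the same route as the paper: both derive $(G_e)|_S\leq\sg{\rho}$ from the fact that the centralizer of a full cycle is the cyclic group it generates, and both establish faithfulness by showing that an automorphism fixing a vertex and one of its neighbours must fix all of its neighbours (via semiregularity of $\sg{\rho}$ on $S$), then propagating this over the connected graph. The only cosmetic difference is that you organise the propagation as an induction on graph distance, whereas the paper phrases it through the two-point stabilizers $G_{h,hs}$ acting trivially on $hS$ and the chain of sets $S, S^2, S^3,\ldots$.
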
 
\begin{proof} Pick an arbitrary $\phi\in G_e$. Then $\Delta_e \phi = \phi|_S$ implying $\rho \phi|_S =\phi|_S \rho$. Since $\rho$ is a full cycle on $S$, any permutation commuting with it belongs to $\sg{\rho}$. Therefore 
$(G_e)|_S\leq\sg{\rho}$. This inclusion also implies that for each $s\in S$ the two-point stabilizer $G_{e,s}$ acts trivially on $S$. Therefore $G_{h,hs}$ acts trivially on $hS$ for any $h\in H$ and $s\in S$. Thus if $\phi$ fixes $e$ and $s\in S$, then it fixes pointwise the sets $S, S^2, S^3$ etc. Since $\cay{H}{S}$ is connected, we conclude that $G_{e,s}$ is trivial, i.e. $G_e$ acts faithfully on $S$. 
\end{proof}

The above statement shows the full automorphism group $G$ of a connected $\cay{H,S}{\rho}$ is a product of $\wh{H}$ with the cyclic group $G_e$. Moreover the restriction of $G_e$ on $S$ is contained in $\sg{\rho}$.

\section{Sylow subgroups of CIM-groups}\label{sectionnegative}
Similarly to the classical case of CI-groups, it follows from Lemma \ref{lemsubgroup} that it is important to investigate $p$-groups.
Babai and Frankl proved that if a group $H$ is a $CI^{(2)}$-group of prime power order, then $H$ is either elementary abelian $p$-group, the quaternion group of order $8$ or a cyclic group of small order.   The statement below describes odd order Sylow subgroups of a $CIM$-group.

\begin{lem}\label{sylowodd}
A Sylow $p$-subgroup of a CIM-group $H$ corresponding to an odd prime divisor $p$ of $|H|$ has order $p$.
\end{lem}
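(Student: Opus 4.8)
The plan is to reduce to groups of order $p^2$ and then destroy the CI-property of an explicit connected map by Babai's criterion (Lemma~\ref{babai}). By Lemma~\ref{lemsubgroup} every subgroup of the CIM-group $H$ is a connected CIM-group; in particular, if the Sylow $p$-subgroup $P$ had order at least $p^2$, then (as every $p$-group of order $\geq p^2$ does) it would contain a subgroup of order $p^2$, which, being a subgroup of $H$, would again be a connected CIM-group. Since a group of order $p^2$ is isomorphic either to $\Z_{p^2}$ or to $\Z_p\times\Z_p$, it suffices to produce, for each odd prime $p$ and each of these two groups $P$, a connected Cayley map over $P$ that is not a CI-map. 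By Lemma~\ref{babai} it is enough to exhibit a connected $M=\cay{P,S}{\rho}$ whose automorphism group $\aut{M}$ contains a regular subgroup $R\isom P$ that is not conjugate to $\wh{P}$.

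Both maps are built uniformly from an automorphism $\sigma\in\aut{P}$ of order $p$: for $\Z_{p^2}$ I take $\sigma$ to be multiplication by $1+p$, and for $\Z_p\times\Z_p$ a transvection. The fixed points of $\sigma$ on $P\setminus\{e\}$ form an ``axis'' (the elements of order $p$ in the cyclic case, a hyperplane in the elementary abelian case), while every other element lies in a $\sigma$-orbit of length $p$. I choose a single such orbit $O$ so that $O\cup O^{-1}$ generates $P$, and set $S:=O\cup O^{-1}$; then $|S|=2p$, $S=S^{-1}$, $\sg{S}=P$ and $\sigma(S)=S$ (here $O\cap O^{-1}=\emptyset$ because $p$ is odd). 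Writing $O=\{o_0\stb o_{p-1}\}$, $O^{-1}=\{n_0\stb n_{p-1}\}$ with $\sigma(o_k)=o_{k+1}$ and $\sigma(n_k)=n_{k+1}$, I define the rotation $\rho$ by $\rho(o_k)=n_k$, $\rho(n_k)=o_{k+1}$. This is a single $2p$-cycle with $\rho^2=\sigma|_S$, so $\sigma|_S\in\sg{\rho}$ and hence $\sigma\in\aut{M}$. Consequently $\aut{M}\supseteq \wh{P}\rtimes\sg{\sigma}$, a non-abelian group of order $p^3$ which is the modular group in the cyclic case and the Heisenberg group in the elementary abelian case.

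Inside this group of order $p^3$ the subgroup $\wh{P}$ has index $p$, hence is normal, and it admits a second regular subgroup $R\isom P$ with $R\neq\wh{P}$: in the modular group $R=\sg{\wh{a}\,\sigma}$ is cyclic of order $p^2$; in the Heisenberg group $R$ may be taken to be any maximal abelian (hence $\isom\Z_p\times\Z_p$) subgroup other than $\wh{P}$ and other than the one containing the point stabilizer $\sg{\sigma}$. In either case $R$ meets every point stabilizer trivially and is therefore regular, and since $\wh{P}$ is normal its conjugacy class is $\{\wh{P}\}$, so $R$ is not conjugate to $\wh{P}$. By Babai's criterion $M$ is thus not a CI-map, \emph{provided} this non-conjugacy survives in the full group $\aut{M}$.

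This proviso is the crux. I must show that $\aut{M}$ equals $\wh{P}\rtimes\sg{\sigma}$ and is not larger. By Lemma~\ref{310116a} the stabilizer $\aut{M}_e$ is cyclic, acts faithfully on $S$, and restricts into $\sg{\rho}$; hence $|\aut{M}_e|$ divides $|S|=2p$, and as it contains $\sigma$ it equals $p$ or $2p$. The whole difficulty is to exclude the value $2p$, i.e.\ to rule out an involution in $\aut{M}_e$, which would necessarily restrict to the unique involution $\rho^{p}\in\sg{\rho}$ on $S$. I expect to verify that $\rho^{p}$ cannot be extended to a map automorphism fixing $e$—equivalently, that the constructed map is chiral—by propagating the prescribed values of such an automorphism along $S,S^2,\ldots$ using connectivity and checking that the resulting assignment is incompatible with the group multiplication. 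Once $|\aut{M}_e|=p$ is established, $\aut{M}$ is exactly the group of order $p^3$ analysed above, the non-conjugacy of $R$ and $\wh{P}$ is genuine, and the proof is complete.
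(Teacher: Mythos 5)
Your reduction to groups of order $p^2$ via Lemma~\ref{lemsubgroup}, the appeal to Babai's criterion, and your analysis of the two regular subgroups inside the order-$p^3$ group $\wh{P}\rtimes\sg{\sigma}$ all follow the paper's proof. But the step you yourself call ``the crux'' --- ruling out $|\aut{M}_e|=2p$, i.e.\ proving $\aut{M}=\wh{P}\rtimes\sg{\sigma}$ --- is never carried out: you only announce that you ``expect to verify'' chirality by propagation. This is a genuine gap, and in fact the expectation is \emph{false} for your construction. Take $p=3$, $P=\Z_9$, $\sigma(x)=4x$, $O=\{1,4,7\}$ with $o_k=\sigma^k(1)$, and the natural indexing $n_k=o_k^{-1}$, so that $S=\{1,2,4,5,7,8\}$ and $\rho=(1,8,4,5,7,2)$. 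Let $\phi$ be the permutation of $\Z_9$ given by $\phi(x)=-x$ for $x\in\{0,3,6\}\cup O$ and $\phi(x)=5x$ for $x\in O^{-1}$; explicitly $\phi=(3,6)(1,8,4,5,7,2)$. Then $\phi$ fixes $0$, satisfies $\phi|_S=\rho$, and a direct check of all differentials shows $\Delta_h\phi\in\sg{\rho}$ for every $h\in\Z_9$ (for instance $\Delta_1\phi=\rho^{-1}$, $\Delta_2\phi=\rho^{3}$, $\Delta_3\phi=\rho$), so $\phi\in\aut{M}_0$ and $\phi$ has order $6$: your map is regular, not chiral, and $|\aut{M}_0|=2p$. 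Worse, $\phi\,\wh{1}\,\phi^{-1}=\wh{8}\sigma^2$ and $(\wh{8}\sigma^2)^2=\wh{1}\sigma$, so conjugation by $\phi$ carries $\wh{\Z_9}$ exactly onto your second regular subgroup $R=\sg{\wh{1}\sigma}$. Thus the two regular subgroups you exhibit are conjugate in the full group $\aut{M}$, and Babai's criterion yields no contradiction from this map. (Some other choice of the shift between the indexings of $O$ and $O^{-1}$ might restore chirality, but that is precisely the verification you omitted, and since both the cyclic and elementary abelian cases must be handled, the proof as written collapses.)

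The paper avoids this difficulty entirely by a smarter choice of rotation. Instead of your order-$p$ automorphism $\sigma$ it uses $\alpha(x):=-\beta(x)$, an automorphism of order $2p$; since $\alpha^p=-1$, every nonzero $\alpha$-orbit is symmetric, hence of even length, so some orbit $S$ has length exactly $2p$, and the paper sets $\rho:=\alpha|_S$. Now $\sg{\alpha}\leq\aut{M}_e$ gives $|\aut{M}|\geq|K|\cdot 2p$, while Lemma~\ref{310116a} gives $|\aut{M}_e|\leq|S|=2p$; hence $\aut{M}=\wh{K}\rtimes\sg{\alpha}$ with \emph{no} chirality argument at all --- the lower bound already meets Jajcay's upper bound, because the rotation itself (not merely its square) is the restriction of a group automorphism of order $|S|$. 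After that, normality of $\wh{K}$ plus the exhibition of a second regular subgroup (the cyclic $\sg{\gamma}$ with $\gamma(x)=(1+p)x+1$ in the $\Z_{p^2}$ case; the subgroup of maps $\tau_{a,b}:(x,y)\mapsto(x+ay+b,\,y+a)$ in the $\Z_p^2$ case) finishes the proof, much as in your order-$p^3$ analysis. The cleanest repair of your write-up is therefore to replace your interleaved rotation by $\rho=\alpha|_S$ for $\alpha=-\sigma$, which turns the unproved upper bound on $\aut{M}$ into a one-line consequence of Lemma~\ref{310116a}.
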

\begin{proof} It follows from Lemma~\ref{lemsubgroup} that it is sufficient to show that any subgroup of order $p^2$ is not a connected CIM-subgroup.

Let $K$ be a group of order $p^2$. Then either $K\cong\Z_p^2$ or $K\cong\Z_{p^2}$. In both cases there exists an automorphism $\beta\in\aut{K}$ of order $p$ (the concrete examples of $\beta$ are given below). A direct check shows that  the bijection $\alpha\in\sym{K}$ defined via $\alpha(x)=-\beta(x)$ is an automorphism of $K$ of order $2p$. It follows from $\alpha^p = -1$ that each non-zero $\alpha$-orbits is symmetric, and, therefore, has even cardinality. This implies that at least one orbit of $\alpha$ contains $2p$ element. Let us denote this orbit as $S$. Clearly $\sg{S}=K$. Consider a Cayley map $M=\cay{K,S}{\alpha|_K}$. The group $G:=\aut{M}$ contains the semidirect product $\wh{K}\rtimes\sg{\alpha}\leq\sym{K}$. Combining this with $|\au(M)|\leq |K||S|=|K||\sg{\alpha}|$ we conclude that  $G= \wh{K}\rtimes\sg{\alpha}$ (so, $M$ is a balanced regular map). We claim that $M$ is not a CI-map. According to Lemma~\ref{babai} it is enough to find two $K$-regular subgroups of $G$ which are not conjugate in $G$. Since $\wh{K}$ is normal in $G$, it is sufficient to find a $K$-regular subgroup of $G$ distinct from $\wh{K}$.   To point out such a subgroup we consider the cases of $K\cong\Z_{p^2}$ and $K\cong \Z_p^2$ separately. In both cases we use
the fact that $\beta = \alpha^{p+1}\in G$.

\medskip

\noindent {\bf Case of}  $K\cong\Z_{p^2}$.\\
In this case we chose $\beta\in\aut{K}$ defined via $\beta(x)=(1+p)x$. The permutation $\gamma(x):=\beta(x) + 1 = (1 + p)x + 1$ belongs to the group $G$ because $\gamma=\wh{1}\beta$.  A direct check shows that $\gamma^p(x) = x + p$ implying $o(\gamma^p)=p$, and, consequently, $o(\gamma) = p^2$. Therefore $\sg{\gamma}$ is a regular cyclic subgroup of $G$ different from $\wh{K}$.

\medskip

\noindent {\bf Case of} $K\cong\Z_p^2$.\\
In this case we chose $\beta\in\aut{\Z_p^2}$ defined via $\beta((x,y))=(x+y,y)$. Then the group $G$ contains the subgroup
$\wh{\Z_p^2}\rtimes\sg{\beta}$ which  consists of all permutations of the form $(x,y)\mapsto(x+ay +u,y+v)$ where $a,u,v\in\Z_p$. A direct check shows that the permutations $\tau_{a,b}:(x,y)\mapsto (x+ay+b,y+a),a,b\in\Z_p$ form a subgroup, say $T$, of $G$ isomorphic to  $\Z_p^2$. It is easy to check that $T$ acts regularly on $\Z_p^2$.
\end{proof}

\subsection{Sylow $2$-subgroups of CIM-groups}

\begin{prop}\label{cyclic2group}
For every $n \ge 4$ the cyclic group $\Z_{2^n}$ is not a connected CIM-group.
\end{prop}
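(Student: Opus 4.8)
The plan is to prove, for each fixed $n\ge 4$, that $H=\Z_{2^n}$ is not a connected CIM-group by exhibiting a single connected Cayley map $M$ over $H$ that fails the CI-property, detecting the failure through Babai's criterion (Lemma~\ref{babai}): I will produce a second $\Z_{2^n}$-regular subgroup of $\aut M$ that is not conjugate to $\wh H$. (Note that $\Z_{16}\le\Z_{2^n}$ does not reduce the problem, since connectedness of maps is lost when passing to an overgroup; the construction must be uniform in $n$.)

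For the construction, let $\beta\in\Aut{H}$ be multiplication by $1+2^{n-1}$. It is an involution fixing every even element and acting on the odd elements by $s\mapsto s+2^{n-1}$. I want a connected map $M=\cay{H,S}{\rho}$ having $\beta$ as a map automorphism; since a group automorphism $\sigma$ is a map automorphism exactly when $\sigma(S)=S$ and $\sigma|_S\in\sg{\rho}$, I take $S$ inside the odd elements (so $\beta|_S$ is fixed-point-free), closed under negation and under $+2^{n-1}$, with $\sg{S}=H$, and I choose the rotation $\rho$ to be a full cycle of length $|S|$ with $\rho^{|S|/2}=\beta|_S$. Then $\beta\in\aut M$, hence $\gamma:=\wh1\,\beta$, that is $\gamma(x)=(1+2^{n-1})x+1$, also lies in $\aut M$. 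A direct computation gives $\gamma^2(x)=x+(2+2^{n-1})$, a translation by an element of additive order $2^{n-1}$, so $\gamma$ has order $2^n$ and $R:=\sg{\gamma}$ is a $\Z_{2^n}$-regular subgroup of $\aut M$. Since $\gamma$ has nontrivial multiplier it is not a translation, so $R\ne\wh H$; moreover $R$ and $\wh H$ share the index-two subgroup $\wh{2\Z}$ of even translations.

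Next I reduce non-conjugacy to an explicit finite check. Suppose $g\in G:=\aut M$ satisfies $gRg^{-1}=\wh H$. As $R$ is regular, replacing $g$ by $gr$ for a suitable $r\in R$ lets me assume $g$ fixes $0$, so $g\in G_e$. Then $g\gamma g^{-1}=\wh u$ for some odd $u$, and from $g\gamma=\wh u\,g$ with $g(0)=0$ I get $g(\gamma^i(0))=iu$; since $\gamma$ is a $2^n$-cycle this determines $g$ on all of $H$, yielding the explicit ``almost-linear'' map $g_\lambda$ with $g_\lambda(x)=\lambda x$ for even $x$ and $g_\lambda(x)=\lambda(x+2^{n-2})$ for odd $x$, where $\lambda=u(1+2^{n-2})^{-1}$ is a unit. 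None of these $g_\lambda$ is a group automorphism. By Lemma~\ref{310116a}, membership $g_\lambda\in G_e$ forces $g_\lambda|_S\in\sg{\rho}$; hence it suffices to choose $S$ and $\rho$ so that $g_\lambda|_S\notin\sg{\rho}$ for every unit $\lambda$.

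The hard part is exactly this last step, and it is where $n\ge4$ must enter. The restrictions $g_\lambda|_S$ are affine maps distinct from the identity and from $\beta|_S=\rho^{|S|/2}$, and I must keep all of them out of the cyclic group $\sg{\rho}$. Naive choices fail: with $S$ the full set of odd elements and $\rho$ the shift by $2$, the map $g_1$ (fixing evens and sending odd $x\mapsto x+2^{n-2}$) is a genuine map automorphism, so that $M$ is a CI-map; and the four-element orbit $\{\pm1,\pm(1+2^{n-1})\}$ gives a regular, hence CI, map. I would therefore take $\rho$ to be a sufficiently ``twisted'' full cycle — concretely one for which the square root $\rho^{|S|/4}$ of $\beta|_S$ inside $\sg{\rho}$ is not affine, so that $\sg{\rho}$ meets the affine maps only in $\{\mathrm{id},\beta|_S\}$ — and then verify $g_\lambda|_S\notin\sg{\rho}$ uniformly, ruling out the fixed-point-free $g_\lambda$ by comparing orders and fixed-point patterns against the powers of $\rho$. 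I expect the construction of the twist and its verification to be the genuine obstacle: for $n=3$ the cyclic group $\sg{\rho}$ is unavoidably forced to contain one of the $g_\lambda|_S$ (the conjugator really exists, consistent with $\Z_8$ being a CIM-group), and only the extra room present when $n\ge4$ allows the twist to defeat every candidate $g_\lambda$.
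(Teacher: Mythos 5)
Your setup is sound and in fact coincides with the paper's own starting point: the same involution $\beta(x)=(1+2^{n-1})x$, the same second regular subgroup $R=\sg{\wh{1}\beta}$ (your computation $\gamma^2(x)=x+2+2^{n-1}$, hence $o(\gamma)=2^n$, is exactly the paper's), and your reduction of a hypothetical conjugation $gRg^{-1}=\wh{H}$ to the explicit family $g_\lambda$ is correct and even a bit sharper than what the paper uses. But there is a genuine gap, and you name it yourself: you never construct the map. The entire content of the proposition is the existence of one connected Cayley map $\cay{\Z_{2^n},S}{\rho}$ with $\beta(S)=S$, $\beta|_S=\rho^{|S|/2}$, and no $g_\lambda|_S$ lying in $\sg{\rho}$; your ``sufficiently twisted'' rotation is only postulated, and no candidate $S$, no candidate $\rho$, and no verification are given. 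Since you yourself observe that natural choices (all odd elements with $\rho$ a shift; the four-element orbit $\{\pm 1,\pm a\}$) genuinely fail, the existence of a working pair $(S,\rho)$ cannot be waved through --- as it stands, nothing is proved for any $n$.

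For comparison, the paper fills exactly this hole with a concrete choice: $a=1+2^{n-1}$, $S=\{1,-1,3,-3a,a,-a,3a,-3\}$ and $\rho=(1,-1,3,-3a,a,-a,3a,-3)$, so that $\alpha|_S=\rho^4$. It then proves $\aut{M}=A:=\wh{\Z_{2^n}}\rtimes\sg{\alpha}$ as follows: the stabilizer $G_0$ is cyclic and semiregular on $S$ (Lemma~\ref{310116a}), so if $|G_0|>2$ there is $\sigma\in G_0$ with $\sigma|_S=\rho^2$; the set $T=\{x\,:\,|S\cap(S+x)|=6\}=\{\pm 2,\pm 2+2^{n-1}\}$ is $\sigma$-invariant, and comparing $\sigma(S\setminus(S+2))$ with $S\setminus(S+\sigma(2))$ for each $\sigma(2)\in T$ gives a contradiction. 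Once $A=\aut{M}$ is known, non-conjugacy is immediate without any analysis of conjugating elements, because the two regular cyclic subgroups have index two in $A$ and hence are both normal. So the paper's route replaces your $g_\lambda$-elimination by a determination of the full automorphism group via a common-neighbour counting invariant; your finite-check strategy could also be made to work with the paper's $(S,\rho)$, but some such explicit construction together with a verification argument is unavoidable, and supplying it is precisely what your proposal is missing.
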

\begin{proof}
The element $a=1+2^{n-1}\in\Z_{2^n}$ has multiplicative order $2$. Therefore the automorphism $\alpha\in\aut{\Z_{2^n}}$ defined via $\alpha(x)=ax$ has order two as well. We construct an antibalanced Cayley map the automorphism group of which contains the subgroup $\wh{\Z}_{2^n} \rtimes \langle \alpha \rangle$. Let $S=\left\{ 1,-1, 3,-3a, a , -a, 3 a, -3  \right\}$ be a set of $8$ different elements, and let $\rho= (1,-1, 3,-3a, a , -a, 3 a, -3 )$ be an $8$-cycle. The permutation $\alpha$ is an automorphism of the map $M:=\cay{\Z_{2^n},S}{\rho}$, because $\alpha(S) =S$ and $\alpha|_S = \rho^4$. Thus the full autmorphism group $G:=\au(M)$ contains the subgroup $A:=\wh{\Z_{2^n}} \rtimes \langle \alpha \rangle$.

 Straightforward calculation shows that $( \wh{1}\alpha)^2(x)=x+a+1 = x+ 2^{n-1}+2$ implying that $(\wh{1}\alpha)^2$ has order  $2^{n-1}$. Hence the order of $ \wh{1}\alpha$ is $2^n$.
Therefore the subgroup $A$ of $G$ contains at least two regular subgroups isomorphic to $\Z_{2^n}$, both of index two.
These subgroups are not conjugate in $A$, since they are normal in $A$. Thus it is enough to prove that $A=G$. The latter is equivalent to showing that
the point stabilizer of $G_0$ has order two.
Assume, towards a contradiction, that $|G_0| > 2$. The group $G_0$ is cyclic and acts on $S$ faithfully and semi-regularly. Therefore there exists an element $\sigma\in G_0$ such that $\sigma^2 = \alpha$. In particular, $\sigma$ has order $4$. Since $\sigma|_S$ commutes with $\rho$, we conclude that $\sigma|_S=\rho^2 = (1,3,a,3a)(-1,-3a,-a,-3)$.

Consider the subset $T:=\{x\in \Z_{2^n}\,|\, |S\cap (S+x)| = 6\}$\footnote{These are the elements at distance two from $0$ in $\cay{\Z_{2^n}}{S}$, each of them is connected to $0$ by $6$ paths of length two.
}.
Since $\sigma$ is an automorphism of $\cay{\Z_{2^n}}{S}$ stabilizing $0$, it satisfies the equation $\sigma(x + S) = \sigma(x)+S$ for every $x\in\Z_{2^n}$. Thus  $T$ is $\sigma$-invariant. A direct calculation yields us $T=\{2,-2,2+2^{n-1},-2+2^{n-1}\}$.

Consider the set $\sigma(S\setminus (S+2)) = \sigma(\{-3,-3+2^{n-1}\})$. Since $\sigma$ is an automorphism of the graph $\cay{\Z_{2^n}}{S}$, we can write $\sigma(S+2) =S+ \sigma(2)$. Therefore
$$
\sigma(S\setminus (S+2)) = \sigma(\{-3,-3+2^{n-1}\})
\implies
S\setminus (S+\sigma(2))) = \{-1,-1+2^{n-1}\}
$$
Since $T$ is $\sigma$-invariant $\sigma(2)\in T$, none of the elements $t\in T$ satisfies $S\setminus (S+t)) = \{-1,-1+2^{n-1}\}$, a contradiction.

\end{proof}

\begin{prop}\label{2sylow} Let $P\leq H$ be a Sylow $2$-subgroup of an CIM-group $H$. Then $P$ is either elementary abelian or cyclic $C_{2^n}, n\leq 3$ or $Q_8$.
%generalized quaternion $Q_{2^n}, n\leq 3$.
\end{prop}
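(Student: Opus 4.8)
The plan is to reduce the proposition to a short list of explicit non-CIM examples, via a purely group-theoretic classification, following the template already used in Lemma~\ref{sylowodd} and Proposition~\ref{cyclic2group}. The starting point is the consequence of Lemma~\ref{lemsubgroup}: since $P\le H$ and $H$ is a CIM-group, every subgroup of $P$ is a subgroup of $H$ and hence a connected CIM-group. Consequently it suffices to produce a family $\cF$ of $2$-groups that are \emph{not} connected CIM-groups and such that every $2$-group avoiding all members of $\cF$ as subgroups is isomorphic to $\Z_2^r$, $\Z_4$, $\Z_8$ or $Q_8$. I claim that $\cF=\{\Z_{16},\ \Z_4\times\Z_2,\ D_8,\ Q_{16}\}$ is such a family.

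First I would prove the group-theoretic half: a finite $2$-group $P$ containing none of $\Z_{16},\Z_4\times\Z_2,D_8,Q_{16}$ as a subgroup is one of the four allowed types. Avoiding $\Z_{16}$ forces the exponent of $P$ to be at most $8$. If $P$ is abelian, then avoiding $\Z_4\times\Z_2$ forces $P$ to be cyclic or elementary abelian, so $P\cong\Z_2^r,\Z_4$ or $\Z_8$. If $P$ is non-abelian, I would first show that any two distinct involutions commute, since otherwise the dihedral group they generate contains $D_8$; hence the involutions generate an elementary abelian subgroup $E=\Omega_1(P)$, which is normal. Choosing an element $x$ of order $4$ (such an element exists as $P$ is non-abelian) and any involution $e\in E\setminus\{x^2\}$, a short case analysis of the induced involutory action of $x$ on $E$ produces a copy of $\Z_4\times\Z_2$ (when $x$ centralizes $e$, or when $xex^{-1}=e'$ with $ee'\neq x^2$) or of $D_8$ (when $ee'=x^2$) inside $\sg{x,E}$. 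This forces $E=\{1,x^2\}$, so $P$ has a unique involution; by the classical theorem that a $2$-group with a unique subgroup of order $2$ is cyclic or generalized quaternion, $P$ is generalized quaternion, and the exponent bound together with the absence of $Q_{16}$ leaves only $P\cong Q_8$.

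It then remains to show that each member of $\cF$ fails to be a connected CIM-group. For $\Z_{16}$ this is Proposition~\ref{cyclic2group} (which already handles all $\Z_{2^n}$, $n\ge 4$). For each of $K\in\{\Z_4\times\Z_2,\ D_8,\ Q_{16}\}$ I would follow the template of Lemma~\ref{sylowodd} and Proposition~\ref{cyclic2group}: pick a suitable automorphism $\alpha\in\aut{K}$ and a connection set $S$ carrying a full cycle $\rho$ with $\alpha|_S\in\sg{\rho}$ and $\sg{S}=K$, so that $M=\cay{K,S}{\rho}$ is a connected map whose automorphism group contains $A:=\wh{K}\rtimes\sg{\alpha}$. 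One then establishes $\au(M)=A$, after which $\wh{K}$ is normal in $\au(M)$ and, by Lemma~\ref{babai}, it suffices to exhibit a single $K$-regular subgroup of $A$ distinct from $\wh{K}$ — either a second normal regular subgroup, as the two index-two subgroups in Proposition~\ref{cyclic2group}, or an affine-type subgroup generated by a permutation $\wh{u}\beta$ with $\beta\in\aut{K}$, as the subgroups $\sg{\gamma}$ and $T$ in Lemma~\ref{sylowodd}.

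The main obstacle is the exact determination $\au(M)=A$ for these three groups, i.e.\ pinning down the point stabilizer $\au(M)_e$. By Lemma~\ref{310116a} this stabilizer is cyclic and embeds into $\sg{\rho}$, which reduces the problem to ruling out the few a priori possible extra automorphisms; carrying this out typically requires a combinatorial invariant argument like the distance-two set $T$ used in Proposition~\ref{cyclic2group}. For the non-abelian group $Q_{16}$ this is the most delicate point, as both the choice of $\alpha$ and the identification of the competing regular subgroup are heavier than in the abelian cases. Everything else — the group-theoretic reduction and the bookkeeping of which member of $\cF$ embeds into a given $P$ — is routine, and combining the two halves yields $P\cong\Z_2^r,\Z_4,\Z_8$ or $Q_8$.
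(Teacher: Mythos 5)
Your group-theoretic half is fine, and the reduction via Lemma~\ref{lemsubgroup} is legitimate in principle; the problem is that everything then hinges on the claim that $\Z_4\times\Z_2$, $D_8$ and $Q_{16}$ fail to be \emph{connected} CIM-groups, and this claim is never proved: you only promise to ``follow the template'' and you yourself flag the determination of $\au(M)=A$ and of the competing regular subgroup as the main obstacle. Those three statements are the entire content of the proposition beyond routine $2$-group theory, and they are strictly stronger than anything the paper establishes. For $\Z_4\times\Z_2$ the template can indeed be made to work (the order-$4$ automorphism $\alpha(a,b)=(a+2b,a+b)$ has the symmetric generating orbit $S=\{(1,0),(1,1),(3,0),(3,1)\}$, and $\wh{K}\rtimes\sg{\alpha}$ contains the regular subgroup $\sg{\wh{(1,0)}\alpha,\,\wh{(2,0)}}\cong\Z_4\times\Z_2$ distinct from the normal subgroup $\wh{K}$). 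But for $D_8$ the plan is in real danger of being unworkable, not just incomplete: in the balanced regular map with $S$ the four reflections and $\sigma\colon r\mapsto r,\ s\mapsto rs$, every nonidentity element of a regular subgroup must be fixed-point-free, the only involutions of $\wh{D_8}\rtimes\sg{\sigma}$ outside $\wh{D_8}$ (namely $\sigma^2$ and $\wh{r^2}\sigma^2$) have fixed points, and $D_8$ is generated by its involutions; hence $\wh{D_8}$ is the \emph{unique} regular $D_8$ and that map is CI. Other natural candidates (the $K_{4,4}$ embedding coming from conjugation by a reflection, the octagon) also turn out to be CI. So it is genuinely unclear whether a connected non-CI map over $D_8$ exists at all, and the situation for $Q_{16}$ is similarly unresolved.

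The paper's proof is designed precisely to sidestep this trap, and this is where your route diverges from it in an essential way: the paper never decides the connected-CIM status of $\Z_4\times\Z_2$, $D_8$ or $Q_{16}$. Instead it exploits the full CIM hypothesis on $H$, which applies to \emph{disconnected} maps. The $4$-cycle is simultaneously a Cayley map over $C_4$ and over $K_4$; if $P$ contained both subgroups, the two Cayley maps $\cay{H,\{u,v\}}{(u,v)}$ and $\cay{H,\{c,c^{-1}\}}{(c,c^{-1})}$ over $H$ would be isomorphic disjoint unions of $4$-cycles, and a Cayley isomorphism would be a group automorphism of $H$ carrying $\sg{u,v}=K_4$ onto $\sg{c}=C_4$, which is absurd. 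Thus $\exp(P)>2$ forces $P$ to have a unique involution, Burnside's theorem makes $P$ cyclic or generalized quaternion, Proposition~\ref{cyclic2group} bounds the cyclic case, and the generalized quaternion case reduces to $Q_{16}$, which is killed by the same coexistence trick applied to the pair $Q_8$, $\Z_8$ (a balanced regular map over $Q_8$ whose automorphism group contains a regular $\Z_8$ is also a Cayley map over $\Z_8$). No computation of full map automorphism groups, and no connected non-CI map other than the cyclic one, is ever needed. To repair your proposal you must either actually prove the three missing non-connected-CIM statements (at least one of which may be false), or replace the single-forbidden-subgroup strategy by the paper's pair-of-subgroups argument.
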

\begin{proof} Assume that $\exp(P) > 2$. Then $P$ contains a cyclic subgroup $C_4=\sg{c}$ of order $4$. We claim that $P$ doesn't contain the Klein subgroup $K_4\cong\Z_2^2$, Indeed, if $K_4=\{1,u,v,w\} < P$ is the Klein subgroup, then the Cayley map $M(K_4,\{u,v\},(u,v))$ is isomorphic, as a map, to the Cayley map $\cay{C_4,\{c,c^{-1}\}}{(c,c^{-1})}$. Hence there should exists an automorphism $\alpha\in \au(H)$ which maps the first map onto the second one. Since both maps are connected, this would imply $\alpha(C_4) = K_4$, a contradiction.

Thus $P$ does not contain $K_4$. By Burnside's Theorem \cite{burnside}, $P$ is either cyclic or generalized quaternion. If $P$ is cyclic, then by Proposition~\ref{cyclic2group} its order is bounded by $8$.

Assume now that $P$ is  a generalized quaternion group distinct from $Q_8$.
Then $P$ contains a characteristic cyclic subgroup $C=\sg{c}$ of index $2$. Then it follows from Lemma~\ref{lemsubgroup} and Proposition~\ref{cyclic2group} that $|C|\leq 8$.
Together with $P\not\cong Q_8$ we obtain that $|C|=8$, and, consequently $|P|=16$.

Let $a\in P$ denote an element of order $4$ outside of $C$. Then $\sg{a,c^2}\cong Q_8$. Let $\alpha$ be an automorphism of $\sg{a,c^2}$ whose action is described by the formulas $\alpha(a)=c^2$ and $\alpha(c^2) = a^{-1}$. Its orbit $\{a,c^2,a^{-1},c^{-2}\}$ is symmetric and generates $\sg{a,c^2}$.
Therefore $M = \cay{\sg{a,c^2},\{a,c^2,a^{-1},c^{-2}\}}{\alpha}$ is a regular balanced Cayley map with $\au(M)=\sg{a,c^2}\rtimes{\alpha}$.
The element $\wh{a} \alpha\in \sg{a,c^2}\rtimes{\alpha}$ has order $8$ and acts regularly on the point set $\sg{a,c^2}$ of the map $M$. Therefore there exists a regular Cayley map $M'$ over the cyclic group of order $8$ isomorphic to $M$. Thus $M\cong M'=\cay{C,S}{\rho}$ for some $S\subseteq C$ and an appropriate rotation $\rho$.

The generalized quaternion group of order 16 contains both $Q_8$ and $\mathbb{Z}_8$, therefore if $H=Q_{16}$ is a CIM-group, there exists $\beta\in \au(H)$ which maps $M$ on $M'$. But in this case $\sg{a,c^2}\cong C$, a contradiction.
\end{proof}

\section{Proof of Theorem~\ref{111015a}}\label{sectionpositive}

We start with the following
\begin{lem}\label{frob} Let $H$ be a group which admits a decomposition $H = C K$ such that $K\cap C=\{1\}$ and $K\triangleleft H$ and $C=\sg{c}$ is cyclic of odd order $m$. Assume that a there exists a faithful $C$-orbit $O=\{k,k^c,...,k^{c^{m-1}}\}$ such that $\sg{OO^{(-1)}}=K$.
Then $H$ is not a connected CIM-group.
\end{lem}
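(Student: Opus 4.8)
The plan is to produce a single connected Cayley map $M$ over $H$ whose automorphism group contains a regular subgroup isomorphic to $H$ that is different from (and hence, by normality, not conjugate to) $\wh H$, and then to invoke Babai's criterion (Lemma~\ref{babai}). Write $H=K\rtimes C$ and let $\theta\in\au{H}$ be conjugation by $c$, i.e. $\theta(x)=c^{-1}xc$; on $K$ it has order $m$ with orbit $O=\{k,k^c,\dots,k^{c^{m-1}}\}$. The one structural fact I would exploit is that the right translation $r_c\colon x\mapsto xc$ factors as $r_c=\theta\,\wh c$, commutes with every left translation, satisfies $r_c^m=1$, and with $\wh H$ generates $A:=\wh H\,\sg{r_c}=\wh H\times\sg{r_c}$, in which $\sg{r_c}\cong\Z_m$ is a \emph{central} direct factor. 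Inside $A$ I set $g:=\wh c\,r_c$, an element of order $m$ with $g\,\wh k\,g^{-1}=\wh{ckc^{-1}}$, so that $R:=\sg{\wh K,g}=\wh K\rtimes\sg{g}\cong H$ (the action of $g$ on $\wh K$ is conjugation by $c^{-1}$, giving the same semidirect product up to inverting the cyclic generator). Since $|R|=|H|$ and $R$ meets the point-stabiliser $A_e=\sg{\theta}$ trivially, $R$ is regular; and $R\neq\wh H$ because $g$ has nontrivial image $r_c$ under the projection $A\to A/\wh H\cong\sg{r_c}$.

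Next I would realise $A$ inside $\au{M}$ for a suitable connected map. Take the connection set $S:=Oc\cup(Oc)^{-1}$. It avoids $1$, is symmetric by construction, and consists of $2m$ distinct elements (the two cosets $Oc\subseteq Kc$ and $(Oc)^{-1}\subseteq Kc^{-1}$ are disjoint since $c\neq c^{-1}$). It generates $H$: its elements project onto $c^{\pm1}$, so $\sg{S}$ surjects onto $C=H/K$, while $(k^{c^i}c)(k^{c^j}c)^{-1}=k^{c^i}k^{-c^j}$ shows $\sg{OO^{(-1)}}=K\le\sg{S}$. The automorphism $\theta$ fixes $S$ setwise and acts on $Oc$ and $(Oc)^{-1}$ as two $m$-cycles. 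Using that $m$ is odd I would choose a full $2m$-cycle $\rho$ on $S$ with $\rho^2=\theta|_S$ and, moreover, $\rho(s^{-1})=\rho(s)^{-1}$, so that $M:=\cay{H,S}{\rho}$ is a connected \emph{balanced} Cayley map. As $\theta$ is a group automorphism with $\theta(S)=S$ and $\theta|_S=\rho^2\in\sg{\rho}$, it is a map automorphism; hence $r_c=\theta\,\wh c\in\au{M}$, so $A\le\au{M}$ and in particular $R\le\au{M}$.

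It then remains to show $R$ and $\wh H$ are not conjugate in $G:=\au{M}$. Here I would use balancedness: for a balanced Cayley map the vertex stabiliser acts on $H$ by group automorphisms (the power function of each skew-morphism in $G_e$ is trivial, a fact going back to Biggs), so by Lemma~\ref{310116a} we have $G=\wh H\cdot G_e$ with $G_e\le\au{H}$, whence $\wh H\normal G$. A normal regular subgroup has only itself as a $G$-conjugate, so $R\neq\wh H$ already forces $R$ not to be $G$-conjugate to $\wh H$. By Lemma~\ref{babai}, $M$ is not a CI-map, and therefore $H$ is not a connected CIM-group.

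The main obstacle is the last step: controlling $G=\au{M}$ tightly enough to conclude $\wh H\normal G$. The cleanest route is the balancedness input, so the technical heart is the purely combinatorial construction of a single $2m$-cycle $\rho$ that simultaneously squares to the two $m$-cycles of $\theta|_S$ and commutes with inversion on $S$; this is exactly where the hypothesis that $m$ is odd enters (one solves a congruence such as $2s\equiv 3\pmod m$ to interleave the two $\theta$-orbits in a way compatible with $s\mapsto s^{-1}$). If one preferred to avoid the balanced-map principle, the same non-conjugacy follows from the observation that $\sg{r_c}$ is central in $A$: no element of $A$ can carry $R$ (whose image in $A/\wh H$ is nontrivial) onto $\wh H$, and one would only need to rule out a conjugating element lying outside $A$, i.e. to exclude any extra rotary symmetry of $M$.
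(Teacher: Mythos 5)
Your proposal is correct and is, at its core, the paper's own construction: the paper also takes the connection set $S\cup S^{(-1)}$ with $S=cO$ (your $Oc$ coincides with $cO$ as a set), interleaves the two conjugation-orbits into a $2m$-cycle, explicitly $\rho=(ck_0,(ck_\ell)^{-1},ck_1,(ck_{\ell+1})^{-1},\dots)$ with $\ell=\frac{m+1}{2}$ --- balancedness reduces to the congruence $2\ell\equiv 1\pmod m$, which is exactly where odd $m$ enters, as you predicted --- and works inside the same overgroup $A=\wh H\rtimes\sg{\sigma}=\wh H\sg{r_c}$ (your $\theta$ is the paper's $\sigma$). The differences are confined to the last two steps. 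For the second regular subgroup, you build $R=\sg{\wh K,\wh c\,r_c}$ directly from the central factor $\sg{r_c}$, while the paper transports $A$ onto $H\times C$ via $\psi(\wh h\sigma^i)=(hc^i,c^{-i})$ and pulls back the complement $F=\{(h,\pi(h))\mid h\in H\}$; both verifications that the new subgroup meets the point stabilizer trivially use oddness of $m$ in the same way, so this is only cosmetic. For non-conjugacy in the full group $G=\au(M)$, the paper does not invoke a blanket ``balanced implies $\wh H\normal G$'' principle: it notes that $G_e$ is cyclic, faithful on $S\cup S^{(-1)}$, with restriction inside $\sg{\rho}$, so $[G:A]\le 2$; if $G=A$, normality is immediate, and in the index-two case $M$ is a \emph{regular} balanced map, where the cited theorem of Skoviera--Siran gives $\wh H\normal G$. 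Your stronger claim (the stabilizer of a connected balanced map consists of group automorphisms) is in fact true: writing $\Delta_h\phi=\rho^{j(h)}$, the identities $\phi(hs)=\phi(h)\rho^{j(h)}(s)$ and $\phi(h)=\phi(hs)\rho^{j(hs)}(s^{-1})$ yield $\rho^{j(hs)}(s^{-1})=\bigl(\rho^{j(h)}(s)\bigr)^{-1}$, and balancedness plus the regularity of $\sg{\rho}$ on $S$ force $j(hs)=j(h)$, hence $j$ is constant by connectivity and $\phi\in\au(H)$. So your route is sound and even removes the paper's case distinction; but since the results you allude to (Biggs, Skoviera--Siran) cover only regular balanced maps, you would need to include this short argument (or fall back on the paper's dichotomy) rather than cite the general statement as known.
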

\begin{proof}
It is sufficient to provide an example of a connected non-CI map over $H$.
Take $S:=cO=\{ck_0,ck_1, ..., ck_{m-1}\}$ where $k_i:=k^{c^i}, i=0,...,m-1$. Then $S^{(-1)}\cap S=\emptyset$ because the images of $S$ and $S^{(-1)}$ in $H/K\cong C$ are $c$ and $c^{-1}$, respectively.

Take a Cayley map $M=\cay{H, S\cup S^{(-1)}}{\rho}$ where
$$\rho = (ck_0, (ck_\ell)^{-1},ck_1, (ck_{\ell + 1 })^{-1}, ... )$$
and $\ell = \frac{m+1}{2}$. Notice that the condition $\sg{OO^{(-1)}}=K$ implies that the map is connected.

It follows from the construction that $\rho^2 = \sigma|_{S\cup S^{(-1)}}$, where $\sigma$ is the inner automorphism of $H$ mapping $x$ to $x^c$. Therefore $\sigma\in\aut{M}$ and $G:=\wh{H}\rtimes\sg{\sigma}\leq \aut{M}$.

In order to build a regular subgroup of $\wh{H}\rtimes{\langle \sigma \rangle }$ different from $\wh{H}$ we notice, first, that this group is isomorphic to a direct product $H\times C$ where the isomorphism is defined via $\psi:\wh{h}\sigma^i \mapsto (hc^i,c^{-i})$. Under this isomorphism the point stabilizer $G_1=\sg{\sigma}$ is mapped onto the subgroup $\psi(G_1) = \{(d^{-1},d)\,|\,d\in C\}$.

Let $\pi:H\rightarrow C$ be a projection homomorphism defined  via $\pi(x k):=x$ for $x\in C$ and $k\in K$. Then $F:=\{(h,\pi(h))\,|\, h\in H\}$ is a subgroup of $H\times C$ which intersects $\psi(G_1)$ trivially. Indeed,
$$(h,\pi(h))\in \psi(G_1)\iff \pi(h)=h^{-1}\implies h\in C\implies \pi(h)=h\implies h=h^{-1}.$$
By assumption $C$ has odd order. Therefore $h=1$.

It follows from $F\cap \psi(G_1)=1$ that $\psi^{-1}(F)$ is a regular subgroup of $G$. Thus $G$ contains two regular subgroups isomorphic to $H$, which are $\wh{H}$ and $\psi^{-1}(F)$. Since $\wh{H}\triangleleft G$, it is not conjugate to $\psi^{-1}(F)$ inside $G$.

Since $G_1$ has two orbits on the connection set $S\cup S^{-1}$, either  $\aut{M}=G$ or $[\aut{M}:G]=2$. In the first case we already have two $H$-regular subgroups of $G$ which are non-conjugate in $G$. In the second case it follows from $\rho(x^{-1})=\rho(x)^{-1}$ that $M$ is a regular balanced map over $H$. It was proved in \cite{skovierasiran} that $\wh{H}\trianglelefteq \aut{M}$. Since $G$ contains a $H$-regular subgroup distinct from $\wh{H}$, it is not conjugate to $\wh{H}$ inside $\aut{M}$.
\end{proof}

{\bf Remark.} The condition $\sg{OO^{(-1)}}=K$ is always fulfilled if $K$ does not contain a proper non-trivial $C$-normalized subgroups. For example, if $K$ is of prime order, then $\sg{OO^{(-1)}}=K$ holds for any non-trivial orbit $O$.

\medskip

Now we are ready to prove Theorem \ref{111015a}.
\begin{proof}
Let $T$ denote a Sylow $2$-subgroup of $H$. Our proof is divided into few steps.

\medskip

\noindent{\bf Step 1.} Any normal subgroup $N$ of $H$ of odd order  is cyclic.\\
Since all Sylow subgroups of $N$ have prime order, it is sufficient to prove that any Sylow subgroup of $N$ is normal in $H$. This would follow if we prove that each Sylow subgroup of $N$ has a normal complement. To show that let us fix a Sylow subgroup $P$ of order $p$, where $p$ is prime. By Burnside Theorem the existence of a normal complement follows from $\mathbf{N}_N(P) = \mathbf{C}_N(P)$. Assume towards a contradiction that there exists $g\in\mathbf{N}_N(P)$ which does not centralize $P$. We may assume that $o(g)$ is a prime power. By Lemma~\ref{sylowodd} any Sylow subgroup of $N$ has a prime order. Therefore $o(g)$ is prime distinct from $p$.  In this case the group $\sg{g}P$ satisfies the assumptions of Lemma~\ref{frob} and therefore, is not a connected CIM group. A contradiction.\

\medskip

\noindent {\bf Step 2.} $T$ has a normal complement. \\
By Proposition~\ref{2sylow},
$T$ is isomorphic to one of the groups $\Z_2^r,\Z_4,\Z_8$ or $Q_8$.
If $T$ is cyclic, then the result follows from the Cayley normal $2$-complement Theorem.

Assume now that $T$ is not cyclic, i.e. $T\cong\Z_2^r$ or $T\cong Q_8$.
By Frobenius normal $p$-complement Theorem it is sufficient to show that $\mathbf{N}_H(T)/\mathbf{C}_H(T)$ is a $2$-group. Notice that $\mathbf{N}_H(T)/\mathbf{C}_H(T)$ is embedded into $\aut{T}$.

If $T \not\cong Q_8$, then $T\cong\Z_2^e$ for some $e \geq 1$. Assume, towards a contradiction, that $\mathbf{N}_H(T)/\mathbf{C}_H(T)$ is not a $2$-group. Then there exists an element $g\in \mathbf{N}_H(T)$ of odd order which acts on $T$ nontrivially. Without loss of generality, we may assume that $o(g)$ is a $p$-power for some odd prime divisor $p$ of $|H|$.
Since $|H|_p=p$, we conclude $o(g)=p$. Since $T$ is elementary abelian $2$-group, it contains a minimal $g$-invariant subgroup $T_1$ on which $g$ acts non-trivially. The group $\sg{g} T_1$ satisfies the assumptions of Proposition~\ref{frob}. Therefore $\sg{g} T_1$ is not a connected CIM-group. A contradiction.

If $T\cong Q_8$ and $\mathbf{N}_H(T)/\mathbf{C}_H(T)$ is not a $2$-group, then this group contains an element of order $3$. Hence $\mathbf{N}_H(T)$ contains an element $g$ of order $3$ which acts on $T$ non-trivially. Applying Lemma~\ref{frob} once more we get a contradiction.

\medskip

\noindent{\bf Step 3.}  If $T$ is non-cyclic, then $H\cong N\times T$.\\
As it was mentioned before, a CIM-group has the property that any two elements of the same order are either conjugate or inverse conjugate by an automorphism of $H$. In particular, this implies that all involtiuons of $H$ are $\aut{H}$-conjugate.

If $T$ is non-cyclic, then either it is elementary abelian or $Q_8$. Let us assume first that $T$ is an elementary abelian $2$-group of order at least $4$. Then all non-trivial elements of $T$ are $\aut{H}$-conjugate. Since $N$ is characteristic in $H$, the subgroups $\mathbf{C}_N(s)$ and $\mathbf{C}_N(t)$ are $\aut{H}$-conjugate for  any $s\neq t\in T\setminus\{1\}$. Since any subgroup of $N$ is characteristic in $H$, we conclude that $K:=\mathbf{C}_N(s) = \mathbf{C}_N(t) = \mathbf{C}_N(ts)$. Let $L\leq N$ be a unique subgroup complementary to
$K$ in $N$. Then both $t$ and $s$ invert the elements of $L$, Therefore $st$ acts trivially on $L$ implying $L\leq K$, and consequently $L=1$. Thus
any element of $T$ centralizes $N$. Therefore $H\cong N\times T$.

It remains to settle the case when $T\cong Q_8$. In this case all cyclic subgroups of order $4$ are $\aut{H}$-conjugate. Since $\aut{N}$ is abelian the commutator subgroup $Z$ of $T$ acts trivially on $N$. The quotient group $\bar{H}=H/Z$ is isomorphic to $N\rtimes\Z_2^2$. Moreover all involutions of $\Z_2^2$ are $\aut{\bar{H}}$-conjugate. From the previous paragraph we obtain that $\Z_2^2$ acts trivially on $N$. So, the semi-direct product $N\rtimes\Z_2^2$ is, in fact, the direct one. Therefore $H\cong N\times T$.
\end{proof}

\section{Proof of Theorem~\ref{111015c}}

We start with introduccing the notation $\mathscr{M}$ for the set of groups $\Z_n\times\Z_2^r, \Z_n\times\Z_4,\Z_n\times Q_8$ where $n$ is a square-free odd number.
The statement below collects the properties of these groups. We omit the proof because it is straightforward.
\begin{prop}\label{071215a}
The following properties hold
\begin{enumerate}
\item The subgroups and factor groups of $H\in\mathscr{M}$ belong to
$\mathscr{M}$;
\item Any two subgroups $A,B\leq H\in\mathscr{M}$ of the same order are conjugate by an automorphism of $H$;
\item Any subgroup automorphism $\beta\in\aut{A}, A\leq H$ may be extended to  an automorphsim of $H$;
\item The groups in $\mathscr{M}$ are Hamiltonian.
\end{enumerate}
\end{prop}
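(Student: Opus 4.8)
The plan is to exploit the fact that every $H\in\mathscr{M}$ has the shape $H=\Z_n\times P$ with $n$ odd and square-free and $P\in\{\Z_2^r,\Z_4,Q_8\}$, so that the two factors have coprime orders. Two standard consequences of coprimality do almost all the work. First, $\Z_n$ and $P$ are the Hall $\{2\}'$- and $\{2\}$-subgroups of $H$, hence characteristic, and every subgroup $A\le H$ splits as $A=A_1\times A_2$ with $A_1=A\cap\Z_n\le\Z_n$ and $A_2=A\cap P\le P$. Second, for the same reason $\aut{H}=\aut{\Z_n}\times\aut{P}$, and every $\beta\in\aut{A}$ factors as $\beta=\beta_1\times\beta_2$ with $\beta_i\in\aut{A_i}$. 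I would state these two facts at the outset; after that each of (a)--(d) reduces to a statement about the odd factor $\Z_n$ (where the relevant information is that a cyclic group of square-free order has a unique subgroup of each order and $\aut{\Z_n}=\prod_{p\mid n}\aut{\Z_p}$) and about the small $2$-group $P$, whose subgroup lattice and automorphism group are explicit.

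For (d) and (a) I would argue directly from the splitting. Every subgroup $A=A_1\times A_2$ is normal, because $A_1\triangleleft\Z_n$ (abelian) and $A_2\triangleleft P$ (as $P$ is abelian or, for $P=Q_8$, Hamiltonian), so $A_1\times A_2\triangleleft\Z_n\times P$; since $\Z_n\times Q_8$ is non-abelian this yields the Hamiltonian property, and in the abelian cases normality is automatic. For (a), the same decomposition gives $A=A_1\times A_2$ with $A_1$ cyclic of odd square-free order and $A_2$ one of the allowed $2$-groups (a subgroup of $\Z_2^r$ is some $\Z_2^s$; a subgroup of $\Z_4$ is $1,\Z_2,\Z_4$; a subgroup of $Q_8$ is $1,\Z_2,\Z_4,Q_8$), so $A\in\mathscr{M}$; and since every subgroup is normal, a factor group is $H/M=(\Z_n/M_1)\times(P/M_2)$, where $\Z_n/M_1$ is again cyclic odd square-free and $P/M_2$ is checked case-by-case to be of type $\Z_2^s,\Z_4$ or $Q_8$ (note in particular $Q_8/Z(Q_8)\cong\Z_2^2$).

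For (b) and (c) I would use the automorphism splitting. Given $A,B\le H$ of equal order, matching odd- and $2$-parts forces $|A_1|=|B_1|$ and $|A_2|=|B_2|$; on the cyclic factor $A_1=B_1$ automatically, so it suffices to find $\alpha_2\in\aut{P}$ with $\alpha_2(A_2)=B_2$. This is the transitivity of $\aut{P}$ on subgroups of each order: for $\Z_2^r$ it is transitivity of ${\rm GL}(r,2)$ on $s$-dimensional subspaces, for $\Z_4$ there is nothing to prove, and for $Q_8$ it is the fact that $\aut{Q_8}\cong S_4$ acts as $S_3$ on the three subgroups $\sg{i},\sg{j},\sg{k}$ of order $4$. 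Property (c) is the dual extension problem: writing $\beta=\beta_1\times\beta_2$, I extend $\beta_1\in\aut{A_1}$ to $\aut{\Z_n}$ by acting as the identity on the complementary factor $\Z_{n/|A_1|}$, and extend $\beta_2\in\aut{A_2}$ to $\aut{P}$ by a block argument on a complement for $\Z_2^r$, trivially for $\Z_4$, and for $Q_8$ by realising inversion of a $\Z_4=\sg{i}$ as conjugation by $j$; the product $\beta_1\times\beta_2$ then lies in $\aut{H}$ and restricts to $\beta$ on $A$.

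None of the steps is hard, which is why the authors omit the proof; the only points requiring care are the two coprime-splitting facts (for subgroups and for automorphisms of $\Z_n\times P$) and the behaviour of $\aut{Q_8}\cong S_4$ on the order-$4$ subgroups, which supplies both the transitivity needed for (b) and the extension of $\Z_4$-inversion needed for (c). I would therefore isolate the splitting facts as a short preliminary observation and then dispatch (a)--(d) in the order (d), (a), (b), (c).
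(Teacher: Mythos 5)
Your proof is correct; note that the paper itself gives no argument for this proposition (the authors explicitly omit the proof as straightforward), so there is no competing approach to compare against. The coprime-splitting facts you isolate --- $A=(A\cap\Z_n)\times(A\cap P)$ for every $A\leq H$ and $\aut{\Z_n\times P}\cong\aut{\Z_n}\times\aut{P}$ --- together with the explicit subgroup/automorphism data for $\Z_2^r$, $\Z_4$ and $Q_8$ (in particular $\aut{Q_8}$ acting as $S_3$ on $\sg{i},\sg{j},\sg{k}$, and inversion on $\sg{i}$ realised by conjugation by $j$) constitute exactly the routine verification the authors had in mind, including the reading of ``Hamiltonian'' as ``every subgroup is normal'' that the paper uses later in Proposition~\ref{170915a}.
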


Our first step provides a reduction of Theorem~\ref{111015c} to the connected case.
\begin{prop}\label{071215b}
If the groups of $\mathscr{M}$ are connected CIM-group, then they are CIM groups.
\end{prop}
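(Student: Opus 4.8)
The goal is Proposition~\ref{071215b}: reduce the CIM-property for groups in $\mathscr{M}$ to the connected case. The plan is to take a group $H\in\mathscr{M}$, assume every connected Cayley map over $H$ is a CI-map, and show that an arbitrary (possibly disconnected) Cayley map $M=\cay{H,S}{\rho}$ is a CI-map as well. By Lemma~\ref{babai} it suffices to show that every $H$-regular subgroup $\mathring{H}\leq\aut{M}$ is conjugate to $\wh{H}$ inside $\aut{M}$. So I would fix such an $\mathring{H}$ and reduce the situation to the connected component of $M$ containing the identity.

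The key observation is that the connected component of $M$ at the identity is the Cayley map $M_0:=\cay{L,S}{\rho}$ where $L:=\sg{S}\leq H$, and $M$ is a disjoint union of $[H:L]$ copies of $M_0$ indexed by the left cosets of $L$. First I would describe $\aut{M}$ in terms of $\aut{M_0}$: an automorphism of $M$ permutes the connected components and acts on each as a map isomorphism onto another component, so $\aut{M}$ embeds into a wreath-type product built from $\aut{M_0}$ and the symmetric group permuting the cosets. Since $L\in\mathscr{M}$ by Proposition~\ref{071215a}(a), and $L=\sg{S}$ makes $M_0$ connected, the hypothesis applies to $M_0$: it is a CI-map of $L$. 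Now given the $H$-regular subgroup $\mathring{H}$, its induced action on the block system of connected components is transitive, and its stabilizer of the block containing the identity restricts to an $L$-regular subgroup $\mathring{L}$ of $\aut{M_0}$. By the connected CIM-hypothesis together with Lemma~\ref{babai} applied to $M_0$, there is $\alpha_0\in\aut{M_0}$ conjugating $\mathring{L}$ to $\wh{L}$.

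The remaining task is to promote the component-level conjugation to a conjugation of the whole regular subgroup $\mathring{H}$ to $\wh{H}$ inside $\aut{M}$. The idea is to use the transitive action of $\mathring{H}$ on the set of components to choose coset representatives, and to transport the single map-automorphism $\alpha_0$ of the base component simultaneously to every component via the regular action; this assembles a genuine element $\alpha\in\aut{M}$. I would then check that $\alpha\,\mathring{H}\,\alpha^{-1}$ stabilizes the base component correctly and agrees with $\wh{H}$ there, and that because $\wh{H}$ and $\mathring{H}$ both act regularly (hence transitively) on the components, agreement on one component forces $\alpha\mathring{H}\alpha^{-1}=\wh{H}$. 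Here Proposition~\ref{071215a}(c), extendability of subgroup automorphisms, and the product structure $\aut{M_0}=\wh{L}\,(\aut{M_0})_e$ from Lemma~\ref{310116a} are the tools that let me lift $\alpha_0$ and control the action off the base component.

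I expect the main obstacle to be the bookkeeping in this last step: ensuring that the component-wise copies of $\alpha_0$ fit together into a well-defined map automorphism of $M$ (respecting the ternary relation $\mathcal{R}$ across all components simultaneously, not merely the graph structure), and that the chosen coset transversal is compatible with both $\wh{H}$ and $\mathring{H}$. The delicate point is that $\mathring{H}$ need not respect the same coset decomposition as $\wh{H}$; I would handle this by first conjugating so that $\mathring{H}$ and $\wh{H}$ induce the same transitive permutation group on the components (possible since both are regular on $H$ and the component structure is $\aut{M}$-invariant), and only then applying $\alpha_0$ diagonally. Once the actions on the block system coincide and the base components are matched by $\alpha_0$, regularity does the rest.
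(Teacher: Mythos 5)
Your strategy is genuinely different from the paper's: you invoke Babai's Lemma~\ref{babai} for the disconnected map $M$ and try to conjugate an arbitrary $H$-regular subgroup $\mathring{H}\leq\aut{M}$ to $\wh{H}$ by working inside $\aut{M}\cong\aut{M_0}\wr\sym{k}$. The first half of this plan is sound (note that you need Proposition~\ref{071215a}(b) to know that the block stabilizer of $\mathring{H}$ is abstractly isomorphic to $L$, and Hamiltonicity to know the block stabilizers coincide with the kernel of the action on components). But the final assembly step, which you describe as bookkeeping, is where the proof actually fails: it is \emph{not} true that two $H$-regular subgroups which induce the same permutation group on the set of components and whose block stabilizers agree on the base component must be equal, nor can the diagonal transport of a single base-component automorphism $\alpha_0$ conjugate one onto the other. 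Concretely, take $H=\Z_2^3=\sg{a}\times\sg{b}\times\sg{c}$, $S=\{a\}$, $\rho=\mathrm{id}$. Then $M$ is a disjoint union of four edges, $L=\sg{a}$, and $\aut{M}=\Z_2\wr\sym{4}$. Write $f_i$ for the flip of the $i$-th edge, $u=f_1f_2f_3f_4=\wh{a}$, and $v=\wh{b}$, $w=\wh{c}$, which permute the four edges purely, so $\wh{H}=\sg{u,v,w}$. Now set $E:=\sg{u,\,vf_1f_2,\,wf_1f_3}$. One checks that $E$ is elementary abelian of order $8$, acts regularly, induces on the four components exactly the same Klein four-group as $\wh{H}$ does, and has block stabilizer $\sg{u}$ with the same restriction to the base edge as $\wh{H}$ --- yet $E\neq\wh{H}$. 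In this situation your $\alpha_0$ is the identity, so your procedure outputs the identity, which does not conjugate $E$ to $\wh{H}$. The element that does work is $f_1$ (indeed $f_1(vf_1f_2)f_1=v$ and $f_1(wf_1f_3)f_1=w$), and it is essentially non-diagonal: it twists one component and fixes the rest. The data distinguishing $E$ from $\wh{H}$ is a pattern of twists attached to the maps \emph{between} components, invisible both on the block system and on any single component; producing a conjugating element that cancels it is the real content that your sketch leaves open.

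The paper avoids this difficulty entirely by never applying Babai's lemma to the disconnected map. It argues directly from the definition of a CI-map: given isomorphic maps $M=\cay{H,S}{\rho}$ and $M'=\cay{H,S'}{\rho'}$, the generated subgroups $\sg{S}$ and $\sg{S'}$ have equal order, so after applying one automorphism of $H$ (Proposition~\ref{071215a}(b)) one may assume $\sg{S}=\sg{S'}$; the identity components $M_1$, $M_1'$ are isomorphic connected maps over $\sg{S}\in\mathscr{M}$, so the connected hypothesis gives $\beta\in\aut{\sg{S}}$ with $\beta(M_1)=M_1'$; finally $\beta$ extends to $\alpha\in\aut{H}$ by Proposition~\ref{071215a}(c), and then $\alpha(M)=M'$ because a Cayley map is determined by its identity component together with the group. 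In other words, for the CI \emph{property} one only has to produce a single group automorphism, which is controlled entirely by what happens on $\sg{S}$; your reformulation demands classifying regular subgroups of a wreath product up to conjugacy, which is a strictly stronger task and is precisely where the cross-component twisting obstruction lives.
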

\begin{proof}
Let $M=\cay{H,S}{\rho}$ and $M'=\cay{H,S'}{\rho'}$ be two isomorphic map over a group $H\in\mathscr{M}$. Then $|\sg{S}|=|\sg{S'}|$, and by Proposition~\ref{071215a} there exists an automorphism $\alpha\in\aut{H}$ such that $\alpha(\sg{S'}) = \sg{S}$. Thus replacing $M'$ by $\alpha(M')$ we may assume that $\sg{S} = \sg{S'}$.
Since $M$ and $M'$ are isomorphic, their connected components $M_1 := \cay{\sg{S},S}{\rho}$ and $M_1':=\cay{\sg{S},S'}{\rho'}$ are isomorphic too. Both $M_1$ and $M_1'$ are connected maps over the group $\sg{S}\in\mathscr{M}$. Therefore there exists $\beta\in\aut{\sg{S}}$ such that $\beta(M_1)=M_1'$.
By Proposition~\ref{071215a} $\beta$ can be extended up  to an automorphism of $H$, $\alpha$ say. Then $\alpha(M)=M'$ hereby proving the claim.
\end{proof}

To prove Theorem~\ref{111015c} for connected
maps we provide a little bit more general result.

\begin{Thm}\label{100915c} Let $G\leq\sym{\Omega}$ be a transitive permutation group with cyclic point stabilizer which contains a regular  subgroup $H\in\mathscr{M}$. Then any $H$-regular subgroup of $G$ is conjugate to $H$ in $G$.
\end{Thm}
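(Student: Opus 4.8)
The plan is to work inside $G$ directly. Identify $\Omega$ with $H$ so that $\wh H$ is the left-regular copy and $e\in\Omega$ is the identity, and set $C:=G_e$, which is cyclic by hypothesis. For an arbitrary $H$-regular subgroup $H^*\le G$, orbit counting gives $|\wh H|=|H^*|=[G:C]$ and $\wh H\cap C=H^*\cap C=1$, so $G=\wh H C=H^* C$ and every element of $G$ is uniquely $\wh h c$ with $\wh h\in\wh H,\ c\in C$. A first useful observation is that the normal core of $C$ fixes $e$ and is normal, hence fixes all of $\Omega$ and is therefore trivial; thus $C$ is a \emph{core-free} cyclic point stabilizer, and $G$ is a faithful transitive group of this shape. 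Throughout I would exploit the defining features of $\mathscr{M}$ recorded in Proposition~\ref{071215a}, namely that every subgroup of $H$ is normal in $H$ and that any two subgroups of equal order are interchanged by an automorphism of $H$.

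The main strategy is to split the desired conjugation into coprime components. Write $H=N\times T$, where $N$ is the cyclic square-free Hall $2'$-subgroup and $T\in\{\Z_2^r,\Z_4,Q_8\}$ is the Sylow $2$-subgroup, and decompose $H^*=N^*\times T^*$ in the same way. I would first conjugate the odd part. For a prime $p\mid n$ the Sylow $p$-subgroup of $\wh H$ has order $p$, so it is a Sylow $p$-subgroup of $G$ whenever $p\nmid|C|$; more generally I would run Burnside's normal $p$-complement theorem inside $G$, observing that the only obstruction to a normal complement is precisely the configuration forbidden by Lemma~\ref{frob}. Building this up over the finitely many (square-free) odd primes should show that the Hall $2'$-subgroups $\wh N$ and $N^*$ are conjugate in $G$; after replacing $H^*$ by a conjugate I may therefore assume $N^*=\wh N$.

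For the $2$-part I would treat $\Z_2^r$, $\Z_4$ and $Q_8$ separately, using that the stabilizer is cyclic to pin down the Sylow $2$-structure of $G$. This parallels Propositions~\ref{cyclic2group} and~\ref{2sylow}, but run in the positive direction: the cyclicity of $C$ restricts the admissible Sylow $2$-subgroups of $G$ enough that the regular $2$-subgroups $T^*$ and $\wh T$ become conjugate, either directly by Sylow's theorem when they are Sylow in the relevant section of $G$, or by matching complements in the $\Z_4$ and $Q_8$ cases. Finally I would glue the two partial conjugations: since $\gcd(|N|,|T|)=1$, coprimality (Schur--Zassenhaus, together with conjugacy of Hall subgroups in the relevant normal sections) combined with the normality of every subgroup of $H$ lets me assemble a single $g\in G$ with $gH^*g^{-1}=\wh H$.

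The hard part will be the gluing step together with the $2$-part when the Sylow $2$-subgroup of $G$ is strictly larger than $T$ or is nonabelian. In that regime $\wh H$ need not be normal in $G$, so one cannot simply read the components of $H^*$ off a normal Hall decomposition of $G$, and all control must come from the core-free cyclic stabilizer. Coordinating the interaction between $N^*$ and $T^*$ inside a single regular subgroup, while keeping the separately obtained conjugations compatible, is where the argument demands the most care.
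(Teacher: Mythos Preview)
Your approach is genuinely different from the paper's, and as written it has real gaps.

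First, the appeal to Lemma~\ref{frob} is misplaced. That lemma shows that a group of a certain semidirect-product shape is \emph{not} a connected CIM-group; the paper uses it to constrain which $H$ can be CIM, and hence to arrive at the list $\mathscr{M}$. In Theorem~\ref{100915c} you are already given $H\in\mathscr{M}$, and Lemma~\ref{frob} says nothing about the ambient group $G$: it does not rule out any obstruction to a normal $p$-complement \emph{in $G$}, since the relevant quotient $\mathbf{N}_G(P)/\mathbf{C}_G(P)$ for a Sylow $p$-subgroup $P$ of $G$ may well be non-trivial. Second, even granting a normal $p$-complement, $\wh{N}$ and $N^*$ are not Hall $2'$-subgroups of $G$ unless $|C|$ is a $2$-power. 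When an odd prime $p$ divides $|C|$, a Sylow $p$-subgroup of $G$ has order strictly larger than $p$, so neither $\wh{H}_p$ nor $H^*_p$ is Sylow in $G$, and Sylow/Hall conjugacy does not touch them. Your sketch offers no mechanism for conjugating two order-$p$ regular subgroups sitting inside a larger Sylow $p$-subgroup of $G$, and that is exactly where the cyclic-stabilizer hypothesis must do its work. The same issue recurs in the $2$-part and in the gluing step, which you yourself flag as the hard case but do not resolve.

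The paper's argument is organized entirely differently. It proceeds by induction on $|G|$, taking a minimal counterexample together with a non-conjugate $H$-regular subgroup $F$ satisfying $G=\langle \wh{H}, F^g\rangle$ for every $g$. The cyclic point stabilizer enters through the key observation (Proposition~\ref{170915c}) that $\fix(S)$ is a block of $G$ for every $S\le G_\omega$; from this one deduces that $G$ has at most one minimal imprimitivity system $\mathcal{P}$ and that $G_\omega\le G_{\mathcal{P}}$ (Propositions~\ref{170915a} and~\ref{170915b}). A short analysis then shows that $G_{\mathcal{P}}$ is a $p$-group, that $G_{p'}=H_{p'}=\mathbf{O}_{p'}(G)\ne 1$, and finally that the $\mathbf{O}_{p'}(G)$-orbits contradict the uniqueness of $\mathcal{P}$. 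What makes this work is that the induction hypothesis applied to $G^{\mathcal{P}}$ forces $G^{\mathcal{P}}=H^{\mathcal{P}}$, an equality you have no route to with a direct Sylow/Hall attack.
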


If $H$ is abelian then  by Ito's theorem \cite{ito} the group $G$ is metabelian and therefore $G$ is solvable. If $H$ is non-abelian, then it is nilpotent and $G$ is solvable by Kegel-Wielandt theorem.

We will prove Theorem~\ref{100915c} by induction on $|G|$ and assume that $G$ is a counterexample of a minimal order. In particular, this implies that the theorem is correct for any proper subgroup $X$ where $H\leq X < G$. Since $G$ is a counerexample, there exists an $H$-regular subgroup $F$ of $G$  which is not conjugate to $\wh{H}$ inside $G$. We fix $F$ till the end of the proof. By the minimality of $G$, we may assume that $\sg{\wh{H},F^g}=G$ for each $g\in G$. 
We write the order of $H$ by $2^rn $. Recall that $n$ is an odd square-free number.

Below the following notiation is used. If $G$ is a group acting on a set $X$, then $G_X$ denote the kernel of this action and $G^X$ denote the image of $G$ in $\sym{X}$.

\begin{prop}\label{170915a} Let $G$ be a minimal counterexample to Theorem~\ref{100915c} and $\mathcal{D}$ be a proper non-trivial imprimitivity system of $G$. Then $G^{\mathcal{D}} = H^{\mathcal{D}}$, or equivalently, $G=H G_{\mathcal{D}}\iff G_\omega\leq G_{{\mathcal D}}$.
\end{prop}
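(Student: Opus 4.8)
The statement packages three equivalent assertions, so I would first record that they are equivalent and then prove the cleanest one, namely $G^{\mathcal D}=H^{\mathcal D}$. For the equivalences, note that $H$ is regular, so $G=\wh{H}G_\omega$ with $\wh H\cap G_\omega=1$; since $G_{\mathcal D}\trianglelefteq G$ is the kernel of the action on $\mathcal D$, comparing images in $\sym{\mathcal D}$ gives $G^{\mathcal D}=H^{\mathcal D}\iff G=\wh H G_{\mathcal D}$, and from $G=\wh H G_\omega$ together with $G_\omega\le G_{D_0}$ one gets $G=\wh H G_{\mathcal D}\iff G_\omega\le G_{\mathcal D}$ by a short order count. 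These are routine and I would dispatch them first.

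The heart of the matter is to show that $\bar G:=G^{\mathcal D}$ is regular on $\mathcal D$. Identifying $\Omega$ with $\wh H$ so that the block $D_0\ni\omega$ becomes a subgroup $U\le H$ and $\mathcal D$ the coset space $H/U$, I would exploit that $H\in\mathscr M$ is Dedekind (Proposition~\ref{071215a}(d)): then $U\trianglelefteq H$, so $\core_H(U)=U$ and $\bar H:=H^{\mathcal D}\cong H/U$ acts \emph{regularly} on $\mathcal D$, with $H/U\in\mathscr M$ by Proposition~\ref{071215a}(a). Next I would pin down the point stabilizer: since $H\cap G_{D_0}=U\le G_{\mathcal D}$ and an order count gives $G_{D_0}=UG_\omega$, its image $\bar G_{D_0}=(G_{D_0})^{\mathcal D}$ equals $G_\omega G_{\mathcal D}/G_{\mathcal D}$, a quotient of the cyclic group $G_\omega$, hence cyclic. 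Thus $\bar G$ is a transitive group with cyclic point stabilizer containing the regular subgroup $\bar H\in\mathscr M$; and as $U\neq 1$ (the system is proper) we have $U\le G_{\mathcal D}$, so $|\bar G|<|G|$ and the inductive hypothesis of Theorem~\ref{100915c} applies to $\bar G$.

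Now suppose, for contradiction, that $\bar G\neq\bar H$. Running the same analysis for the fixed $H$-regular subgroup $F$ (with block-subgroup $V\le F$), the Dedekind property makes $\bar F:=F^{\mathcal D}\cong F/V$ regular on $\mathcal D$, and Proposition~\ref{071215a}(a),(b) identify $F/V\cong H/U=\bar H$; hence $\bar F$ is an $\bar H$-regular subgroup of $\bar G$. By induction applied to $\bar G$, $\bar F$ is conjugate to $\bar H$ in $\bar G$, say $\bar F^{\bar g}=\bar H$ with $\bar g=g^{\mathcal D}$. Since $G\to\bar G$ is a homomorphism, $(F^g)^{\mathcal D}=\bar F^{\bar g}=\bar H=H^{\mathcal D}$, so $F^g$ lies in the full preimage $\wh HG_{\mathcal D}$ of $\bar H$. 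But $\bar G\neq\bar H$ forces $\wh HG_{\mathcal D}<G$, whence $\sg{\wh H,F^g}\le \wh HG_{\mathcal D}<G$, contradicting the minimality assumption $\sg{\wh H,F^g}=G$. Therefore $\bar G=\bar H$, i.e. $G^{\mathcal D}=H^{\mathcal D}$.

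The main obstacle is the passage to the quotient: everything hinges on certifying that $\bar G$ genuinely inherits all three hypotheses of Theorem~\ref{100915c}, and this is exactly where membership of $H$ in $\mathscr M$ is essential — the Dedekind property makes the block-subgroups $U,V$ normal, which is what forces $\bar H,\bar F$ to be \emph{regular} (not merely transitive) and keeps their isomorphism type inside $\mathscr M$. The remaining care is bookkeeping: verifying that a conjugacy achieved downstairs in $\bar G$ lifts to an honest conjugate $F^g$ in $G$ landing inside the proper subgroup $\wh HG_{\mathcal D}$, so that the generation hypothesis $\sg{\wh H,F^g}=G$ of the minimal counterexample can be contradicted.
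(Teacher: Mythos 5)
Your proof is correct and follows essentially the same route as the paper's: pass to the quotient action on $\mathcal{D}$, use the Hamiltonian (Dedekind) property of $H\in\mathscr{M}$ to get regularity of $H^{\mathcal{D}}$, observe the point stabilizer of $G^{\mathcal{D}}$ is cyclic, apply the minimality/induction hypothesis, and finish with the generation property $\sg{\wh{H},F^g}=G$. Your write-up is in fact slightly more careful than the paper's in verifying that $F^{\mathcal{D}}$ is genuinely $H^{\mathcal{D}}$-regular (via Proposition~\ref{071215a}), a point the paper leaves implicit.
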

\begin{proof} Notice that $\mathcal{D}$ is an imprimitivity system of $H$ too. Since $H$ is regular, the setwise stabilizer $H_{\{D\}}$ of a block  $D\in\mathcal{D}$ acts regularly on $D$. Since the block stabilizers are conjugate in $H$ and $H$ is a Hamiltonian group, the subgroup $H_{\{D\}}$ does not depend on a choice of $D\in\mathcal{D}$. Therefore
the subgroup $H_{\{D\}}, D\in\mathcal{D}$ coincides with $H_{\mathcal{D}}$ implying that $D$ is an orbit of $H_{\mathcal{D}}$. It follows from $H_{\mathcal{D}}\leq G_{\mathcal{D}}$ that $ G_{\mathcal{D}}$ acts transitively on each block of $\mathcal{D}$. The group $H^{\mathcal{D}}$ is a regular subgroup of $G^{\mathcal{D}}$. Also $H^{\mathcal{D}}\cong H/H_{\mathcal{D}}\in\mathscr{M}$.
The point stabilizer of $G^{\mathcal{D}}$ is isomorphic to $G_\omega G_{\mathcal{D}}/ G_{\mathcal{D}}\cong G_\omega /(G_\omega\cap G_{\mathcal{D}})$, and, therefore, is cyclic. Thus $G^{\mathcal{D}}\leq\sym{\mathcal{D}}$ satisfies the assumptions of Theorem~\ref{100915c}. Since $|G^{\mathcal{D}}|=|G|/|G_{\mathcal{D}}| < |G|$, we may apply the induction hypothesis to $G^{\mathcal{D}}$. It yields us that  $F^{\mathcal{D}}$ and $H^{\mathcal{D}}$ are conjugate in $G^{\mathcal{D}}$. Therefore there exists $g\in G$ such that $(F^g)^{\mathcal{D}}=H^{\mathcal{D}}$ implying $G^{\mathcal{D}} = \sg{F^g,H}^{\mathcal{D}} =
 \sg{(F^g)^{\mathcal{D}},H^{\mathcal{D}}} = H^{\mathcal{D}}$.
\end{proof}
\begin{prop}\label{170915b} Let $G$ be a minimal counterexample to Theorem~\ref{100915c}. Then $G$ admits at most one minimal imprimitivity system.
\end{prop}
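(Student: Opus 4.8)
The plan is to argue by contradiction. I will suppose that $G$ possesses two distinct minimal imprimitivity systems $\cD_1$ and $\cD_2$ and show that this forces $G$ to be regular, which is impossible for a counterexample to Theorem~\ref{100915c}. The whole argument rests on converting the combinatorial data of the block systems into normal subgroups via Proposition~\ref{170915a}: for each proper non-trivial imprimitivity system $\cD_i$ its kernel $N_i:=G_{\cD_i}$ is a normal subgroup of $G$ containing the point stabiliser $G_\omega$, and --- as extracted from the \emph{proof} of Proposition~\ref{170915a} rather than its statement --- $N_i$ acts transitively on every block of $\cD_i$, so that $N_i\omega$ is exactly the block $B_i$ of $\cD_i$ through a fixed point $\omega$.

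First I would establish the purely combinatorial fact that $B_1\cap B_2=\{\omega\}$. The common refinement of $\cD_1$ and $\cD_2$ is again a $G$-invariant partition, hence an imprimitivity system, whose block through $\omega$ is $B_1\cap B_2$; thus $B_1\cap B_2$ is a block contained in $B_1$. Minimality of $\cD_1$ means $B_1$ admits no proper non-trivial sub-block, so $B_1\cap B_2$ is either $\{\omega\}$ or all of $B_1$; in the latter case $B_1\subseteq B_2$ and minimality of $\cD_2$ forces $B_1=B_2$, i.e. $\cD_1=\cD_2$, contrary to assumption. Hence $B_1\cap B_2=\{\omega\}$.

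Next I would transfer this to the kernels. Since $N_i\omega=B_i$, we get $(N_1\cap N_2)\omega\subseteq B_1\cap B_2=\{\omega\}$, whence $N_1\cap N_2\le G_\omega$; combined with $G_\omega\le N_1\cap N_2$ (Proposition~\ref{170915a}) this gives $N_1\cap N_2=G_\omega$. But $N_1\cap N_2$ is normal in $G$, so $G_\omega\trianglelefteq G$; as $G$ is transitive and faithful on $\Omega$, a normal point stabiliser is trivial, i.e. $G_\omega=1$. Then $G$ is regular and, since $\wh{H}\le G$ with $|\wh{H}|=|\Omega|=|G|$, we conclude $G=\wh{H}$. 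For such $G$ the only $H$-regular subgroup is $\wh{H}$ itself, so $G$ is not a counterexample --- the desired contradiction.

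The main obstacle I anticipate is not any single computation but the bookkeeping linking the combinatorial and group-theoretic pictures: one must be careful that ``minimal'' is used in the sense of a finest non-trivial block system (so that blocks have no proper non-trivial sub-block), and one must invoke the transitivity of $N_i$ on its blocks, which is buried inside the proof of Proposition~\ref{170915a}. Once the combinatorial identity $B_1\cap B_2=\{\omega\}$ is correctly paired with the group identity $N_1\cap N_2=G_\omega$, the reduction to regularity, and thence to the failure of $G$ being a counterexample, is immediate.
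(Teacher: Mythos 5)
Your proposal is correct and follows essentially the same route as the paper: both arguments trap $G_\omega$ inside the intersection of the two kernels via Proposition~\ref{170915a}, use minimality of the two systems to show this intersection is too small, and conclude $G_\omega=1$, forcing $G=\wh{H}$ and contradicting the counterexample hypothesis. The only cosmetic difference is that the paper asserts $G_{\mathcal D}\cap G_{\mathcal E}=\{1\}$ directly from minimality, whereas you prove the slightly weaker identity $N_1\cap N_2=G_\omega$ (via $B_1\cap B_2=\{\omega\}$ and transitivity of the kernels on blocks) and then recover triviality from normality of the point stabilizer --- a valid, and arguably more carefully justified, version of the same step.
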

\begin{proof}
Assume, towards a contradiction, that $G$ admits two minimal imprimitivity systems, say $\mathcal{D}$ and $\mathcal{E}$. By Proposition~\ref{170915a}   $G_\omega\leq G_{\mathcal{D}}$ and $G_\omega\leq G_{\mathcal{E}}$. If follows from minimality of $\mathcal{E}$ and $\mathcal{D}$ that $G_{\mathcal{D}}\cap G_{\mathcal{E}}=\{1\}$. Therefore $G_\omega=\{1\}$ implying $G=H$ contrary to $G$ being a counterexample.
\end{proof}
For a set of elements $S$ of a group acting on a set $X$, we denote by $\fix(S)$, the elements of $X$ fixed by every $s \in S$.
\begin{prop}\label{170915c} Let $G\leq\sym{\Omega}$ be a transitive permutation group with cyclic point stabilizer. Then for each $S\leq G_\omega$ the set $\fix(S)$ is a block of $G$.
\end{prop}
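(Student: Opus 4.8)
The plan is to verify the defining property of a block directly: writing $B:=\fix(S)$, I will show that for every $g\in G$ either $gB=B$ or $gB\cap B=\emptyset$. First note that $B$ is nonempty, since $S\leq G_\omega$ forces $\omega\in\fix(S)=B$, so there is something to prove.

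The key observation is that conjugation transports fixed-point sets: for every $g\in G$ one has $g(\fix(S))=\fix(gSg^{-1})$. Indeed, $x\in\fix(S)$ iff $sx=x$ for all $s\in S$, iff $(gsg^{-1})(gx)=gx$ for all $s\in S$, which says exactly that $gx\in\fix(gSg^{-1})$. Hence $gB=\fix(gSg^{-1})$, and the whole problem is reduced to controlling when two conjugate subgroups of $G_\omega$ have overlapping fixed-point sets.

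Now suppose $gB\cap B\neq\emptyset$ and pick a common point $\alpha\in\fix(S)\cap\fix(gSg^{-1})$. By construction both $S$ and $gSg^{-1}$ fix $\alpha$, so both are subgroups of the stabilizer $G_\alpha$. Since $G$ is transitive, $G_\alpha=gG_\omega g^{-1}$ for a suitable $g$, so $G_\alpha$ is conjugate to $G_\omega$ and therefore cyclic. In a cyclic group there is a unique subgroup of each order; as $|gSg^{-1}|=|S|$, this forces $S=gSg^{-1}$. Taking fixed-point sets and using the identity above yields $gB=\fix(gSg^{-1})=\fix(S)=B$. Thus $gB\cap B\neq\emptyset$ implies $gB=B$, which is precisely the block condition, so $\fix(S)$ is a block of $G$.

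The one step I would flag as essential—rather than merely routine—is the appeal to cyclicity of the point stabilizer, which is exactly the standing hypothesis of the proposition. Without it, two distinct subgroups of the same order could coexist inside $G_\alpha$ and share a fixed point, and the implication $S=gSg^{-1}$ would break down; so the uniqueness of subgroups of a given order in a cyclic group is doing all the real work here.
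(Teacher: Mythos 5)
Your proof is correct and is essentially the paper's own argument: both reduce the block condition to the fact that if $\fix(S)$ and a conjugate $\fix(gSg^{-1})$ share a point $\alpha$, then $S$ and $gSg^{-1}$ both lie in the cyclic stabilizer $G_\alpha$, where subgroups of equal order coincide, forcing $S=gSg^{-1}$ and hence equality of the two fixed-point sets. You merely make explicit the transport identity $g(\fix(S))=\fix(gSg^{-1})$ and the conjugacy of point stabilizers (where, as a cosmetic point, the conjugating element should get a name other than the already-used $g$), both of which the paper leaves tacit.
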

\begin{proof} Assume that $\fix(S)\cap\fix(S^g)$ is non-empty, and pick an arbitrary $\delta\in\fix(S)\cap\fix(S^g)$. Then $S,S^g\leq G_\delta$. Since $G_\delta$ is cyclic, any two subgroups of $G_\delta$ of the same order coincide. Therefore $S=S^g$ implying that $\fix(S^g)=\fix(S)$.
\end{proof}

\noindent{\bf Proof of Theorem~\ref{100915c}}\\
Let $\mathcal{P}$ be a minimal imprimitivity system of $G$.
Pick an arbitrary block $\Pi\in\mathcal{P}$. Then $G_{\{\Pi\}}^\Pi$ is a solvable primitive permutation subgroup of $\sym{\Pi}$. Therefore $|\Pi|$
is a power of a prime divisor $p$ of $|H|$.

\medskip

We split the proof into few steps.

\medskip

\noindent {\bf Step 1.} $|\mathcal{P}| > 1$.\\
Assume the contrary, that is $|\mathcal{P}|=1$, or, equivalently, $\Pi=\Omega$. In this case $H$ is a $p$-group. By Proposition~\ref{170915c} the set $\fix(G_{\alpha,\beta})$ is a block of  $G$ for any pair of points $\alpha,\beta\in\Omega$. Together with primitivity of $G$ this implies that $G_{\alpha,\beta}=1$ whenever $\alpha\neq\beta$. Therefore $G$  is a Frobenius group the kernel of which, $K$ say, has order $|H|$. Since $K$ is a unique Sylow $p$-subgroup of $G$, we conclude $H=K=F$, a contradiction.

\medskip

\noindent {\bf Step 2.} $G_{\mathcal{P}}$ is a $p$-group.\\
Assume that there exists a prime divisor $q\neq p$ of $G_{\mathcal{P}}$. Since $G_{\mathcal{P}}$ acts transitively on each block $\Pi\in\mathcal{P}$ and $G_\omega\leq  G_{\mathcal{P}}$, we conclude that $|G_{\mathcal{P}}| = |G_\omega|\cdot|\Pi|$. This implies that $q$ divides $|G_\omega|$. Thus $G_\omega$ contains a subgroup $Q$ of order $q$.  By Proposition~\ref{170915c} the set $\fix(Q)$ is block of $G$. It follows from Proposition~\ref{170915a} that $Q\leq G_\omega\leq G_{\mathcal{P}}$ that $Q$ fixes each block of $\mathcal{P}$ setwise. Since blocks of $\mathcal{P}$ have a $p$-power size,
the set $\fix(Q)$ intersects each block of $\mathcal{P}$ non-trivially. By Proposition~\ref{170915b} $\mathcal{P}$ is a unique minimal imprimitivity system of $G$. Therefore each block of $G$ is a union of some blocks of $\mathcal{P}$. Thus $\fix(Q)=\Omega$ implying that $Q=\{1\}$. A contradiction.

\medskip

\noindent {\bf Step 3.} $G_{p'}=H_{p'} = \mathbf{O}_{p'}(G)\neq\{1\}$.\\
 By Step 2 $G_{\mathcal{P}}$ is a $p$-group. Therefore
$|H|_{p'}=|H^\mathcal{P}|_{p'}$ and $|G|_{p'}=|G^\mathcal{P}|_{p'}$.
By Proposition~\ref{170915a} $H^\mathcal{P} = G^\mathcal{P}\cong G/G_\mathcal{P}$. Therefore $|G_{p'}|=|H_{p'}|$ implying $G_{p'}=H_{p'}$. By Hall's Theorem there exists $g\in G$ such that $(F^g)_{p'}=(F_{p'})^g = H_{p'}$ implying that $F^g$ normalizes $H_{p'}$. Combining this with $G = \sg{H,F^g}$ we conclude that $H_{p'}\trianglelefteq G$. Together with $H_{p'}=G_{p'}$ we obtain that
$H_{p'}=G_{p'} = \mathbf{O}_{p'}(G)$.

If $G_{p'}$ is trivial, then $G$ and $H$ are $p$-groups.  Since $\mathcal{P}$ is non-trivial (by minimality) and $|\mathcal{P}| > 1$, we conclude that $|H| = |\Omega|\geq p^2$. Together with $H\in\mathscr{M}$ this implies that $p=2$ and $H$ is one of the groups: $\Z_2^r,\Z_4,Q_8$. Since $G^\Pi_{\{\Pi\}}$ is a primitive $2$-group, we conclude that $|\Pi|=2$.  Therefore $G_{\mathcal{P}}$ is an elementary abelian $2$-group. By Proposition~\ref{170915a} $G_\omega\leq G_{\mathcal{P}}$. Therefore $|G_\omega|=2$ and both $H$ and $F$ are index two subgroups of $G$. So both of them are normal in $G$ and $G\leq \mathbf{N}_{\sym{\Omega}}(H)$. If $H$ is isomorphic to one of $\Z_2,\Z_4, Q_8$, then $H$ is a unique $H$-regular subgroup of $\mathbf{N}_{\sym{\Omega}}(H)$, contrary to $F\leq G\leq  \mathbf{N}_{\sym{\Omega}}(H)$.
Therefore $H\cong\Z_2^r,r\geq 2$.

It follows from $H\neq F$ that $G=HF$ and $H\cap F\leq\mathbf{Z}(G)$. It follows from $G=FH$ that a unique involution $s\in G_\omega$ has a presentation $s=h_0f_0$ with $h_0\in H$ and $f_0\in F$. Notice that $h_0\not\in H\cap F$ and $f_0\not\in F\cap H$ (otherwise we would have $s\in (H\cup F)\setminus\{1\}$ which cannot happen because $(H\cup F)\setminus\{1\}$ contains only fixed-point-free permutations).
Thus $G = HF=\sg{f_0}\sg{h_0}(H\cap F)$. It follows from $s^2 = 1$ that $[f_0,h_0]=1$. Together with $H\cap F\leq\mathbf{Z}(G)$ we conclude that $G$ is an abelian group. Thus $G$ should be regular contrary to $|G_\omega|=2$.

\medskip

\noindent {\bf Step 4. Getting the final contradiction.} It follows from Step 3 that
$\mathbf{O}_{p'}(G)$ is nontrivial. Therefore the orbits of $\mathbf{O}_{p'}(G)$ form a non-trivial imprimitivity system of $G$ with block size coprime to $p$.
Since $\mathcal{P}$ is a unique
minimal imprimitivity system (Proposition~\ref{170915b}), the orbits of $\mathbf{O}_{p'}(G)$  are unions of blocks of $\mathcal{P}$.  But the this is impossible, since the cardinality of blocks of $\mathcal{P}$ is a $p$-power.
\qed

\bigskip

We finish this section by resolving the status of the cyclic group of order $8$.
\begin{prop}\label{111015b} A cyclic group $\Z_8$ is a CIM-group.
\end{prop}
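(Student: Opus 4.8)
The plan is to reduce to connected maps, apply Babai's criterion (Lemma~\ref{babai}) to the automorphism group of a connected map, and then control the $\Z_8$-regular subgroups by a Sylow argument in all but finitely many cases.

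\emph{Reduction to the connected case.} The group $\Z_8$ has a unique subgroup of each order $1,2,4,8$, and every automorphism of such a subgroup (the identity on $\Z_2$, and $\pm1$ on $\Z_4$) is the restriction of an automorphism of $\Z_8$. These are exactly the features of $\mathscr{M}$ invoked in the proof of Proposition~\ref{071215b}, so that argument applies verbatim: an arbitrary map $\cay{\Z_8,S}{\rho}$ is reduced to a connected map over $\sg{S}\in\{\Z_1,\Z_2,\Z_4,\Z_8\}$. For $\sg{S}\subsetneq\Z_8$ the subgroup lies in $\mathscr{M}$ and is connected-CIM by Theorem~\ref{100915c}, so it remains only to prove that $\Z_8$ itself is a connected CIM-group. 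Thus let $M=\cay{\Z_8,S}{\rho}$ be connected, $\sg{S}=\Z_8$, and put $G:=\aut{M}$. By Lemma~\ref{babai} it suffices to show that every $\Z_8$-regular subgroup $F\le G$ is conjugate to $\wh{\Z}_8$ in $G$. By Lemma~\ref{310116a} the stabilizer $G_e$ is cyclic with $(G_e)|_S\le\sg{\rho}$, so $|G_e|$ divides $|S|$ and $|G|=8\,|G_e|$.

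\emph{The generic case.} Suppose $|G_e|$ is odd; this occurs in particular whenever $|S|\in\{3,5,7\}$, while the cases $|S|\le2$ are already covered by the remark that maps with $|S|\le2$ are automatically CI. Then $|G|_2=8$, so $\wh{\Z}_8$ is a Sylow $2$-subgroup of $G$. Any $\Z_8$-regular subgroup $F$ also has order $8=|G|_2$ and is therefore itself a Sylow $2$-subgroup; by Sylow's theorem $F$ is $G$-conjugate to $\wh{\Z}_8$, as required.

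\emph{The remaining cases and the main obstacle.} It remains to treat the situations with $|G_e|$ even, which force $|S|\in\{4,6\}$. Up to the group automorphism $x\mapsto 3x$ the relevant underlying graphs are $K_{4,4}$ and the circulant $C_8(1,2)$ (when $|S|=4$) and the cocktail-party graph on $\Z_8\setminus\{0,4\}$ (when $|S|=6$). In each of these the Sylow $2$-subgroup $P\ge\wh{\Z}_8$ of $G$ has order $16$ or $32$, acts transitively on the eight vertices with cyclic point stabilizer $P\cap G_e$, and contains $\wh{\Z}_8$ as a regular subgroup. I would enumerate the rotations $\rho$ that produce an involution in $G_e$, identify $P$ from the resulting short list of transitive $2$-groups of order $16$ and $32$, and check directly that every $\Z_8$-regular subgroup of $P$ is $G$-conjugate to $\wh{\Z}_8$. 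This explicit finite verification is the main obstacle: it is exactly where the special arithmetic of $\Z_8$ (as opposed to the groups of $\mathscr{M}$ handled by Theorem~\ref{100915c}) must be used, and it cannot be bypassed by the Sylow argument precisely because $\wh{\Z}_8$ ceases to be a full Sylow $2$-subgroup once $|G_e|$ is even.
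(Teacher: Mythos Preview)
Your reduction to the connected case and the Sylow argument when $|G_e|$ is odd are sound, and the paper's proof begins in essentially the same way: assuming a non-CI connected map, one passes to a Sylow $2$-subgroup $P\supseteq\wh{\Z}_8$ of $G=\aut{M}$ and notes that a non-conjugate $\Z_8$-regular subgroup forces $|P|\geq 16$. The genuine gap is that you stop precisely where the content lies: the case $|G_e|$ even is not argued, only described as an enumeration you ``would'' perform and explicitly labelled ``the main obstacle''. As written, the proof is therefore incomplete.

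The paper replaces your proposed brute-force enumeration by a short normalizer argument inside $P$. Since in a $2$-group a proper subgroup is strictly contained in its normalizer, one gets $\mathbf{N}_P(\wh{\Z}_8)=\wh{\Z}_8\rtimes\sg{\alpha}$ for some involution $\alpha\in\aut{\Z_8}$, and if $\wh{\Z}_8$ were the unique regular cyclic subgroup of this normalizer then $\mathbf{N}_P(\wh{\Z}_8)$ would be self-normalizing in $P$, a contradiction. Hence $\wh{\Z}_8\rtimes\sg{\alpha}$ already contains a second (necessarily non-conjugate, both being normal) regular cyclic subgroup, and a direct check of the three involutions in $\aut{\Z_8}$ shows this happens only for $\alpha:x\mapsto 5x$. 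Now $\alpha\in G_0$ and $(G_0)|_S\leq\sg{\rho}$ force $\alpha$ to act without fixed points on $S$; together with $S=-S$, $\alpha(S)=S$ and $\sg{S}=\Z_8$ this pins down $S=\{1,3,5,7\}$ and $\rho\in\{(1,3,5,7),(1,7,5,3)\}$. Thus only two maps survive, each with $|G|=32$, and in both all regular cyclic subgroups are conjugate. This structural step is exactly what is missing from your outline: rather than enumerating all rotations on the $|S|\in\{4,6\}$ circulants and identifying the relevant $2$-groups of order $16$ or $32$, the paper determines $\alpha$, then $S$, then $\rho$, and is left with a two-case check.
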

\begin{proof}
Assume towards a contradiction that $M:=\cay{\Z_8,S}{\rho}$ is a non-CI map over $\Z_8$. Let $P$ be a Sylow $2$-subgroup of  $G:=\aut{M}$ which contains $\wh{\Z_8}$. Then $P$ contains a regular cyclic subgroup which is not conjugate to $\wh{\Z_8}$ inside $P$. In particular, $|P| \geq 16$. Therefore $|\mathbf{N}_P(\wh{\Z_8})|\geq 16$. The point stabilizer   $\mathbf{N}_P(\wh{\Z_8})_0$ is cyclic and is contained in $\aut{\Z_8}$. Therefore
$|\mathbf{N}_P(\wh{\Z_8})_0|=2$, or, equivalently, $\mathbf{N}_P(\wh{\Z_8})_0 =\sg{\alpha}$ for some $\alpha\in\aut{\Z_8}$.

If $\wh{\Z_8}$ is a unique regular cyclic subgroup of $\mathbf{N}_P(\wh{\Z_8})$, then $\mathbf{N}_P(\mathbf{N}_P(\wh{\Z_8}))$ normalizes $\wh{Z_8}$. So, in this case $\mathbf{N}_P(\mathbf{N}_P(\wh{\Z_8})) = \mathbf{N}_P(\wh{\Z_8})$ implying $P = \mathbf{N}_P(\wh{\Z_8})$, because in a $p$-group the normalizer of a proper subgroup is strictly bigger than the subgroup. The latter equality contradicts our assumption that $P$ contains non-conjugate regular cyclic subgroups. Thus $\mathbf{N}_P(\wh{\Z_8}) = \wh{\Z_8}\rtimes\sg{\alpha}$ contains non-conjugate regular cyclic subgroups. This yields a unique choice for $\alpha\in\aut{H}$, namely: $\alpha(x)=5x,x\in\Z_8$. Notice that $\wh{\Z_8}\rtimes\sg{\alpha}$ contains exactly two regular cyclic subgroups
$\wh{\Z_8}$ and $\sg{\wh{1}\alpha}$. Each of these subgroups is normal in $\wh{\Z_8}\rtimes\sg{\alpha}$.

Since $\alpha\in G_0$, it acts semiregularly on $S$. Combining this with $\sg{S}=\Z_8$ and $S = -S$ we obtain that
the only possibility for $S$ is $\{1,5,3,7\}$. It follows from $\rho^2 = \alpha|_S$ that either $\rho = (1,3,5,7)$ or $\rho = (1,7,5,3)$. In both cases $M$ is an antibalanced map the full automorphism group of which has order $32$ and has a decomposition $G = \wh{\Z_8}\sg{\rho}$ where $\rho$ acts trivially on the subgroup $2\Z_8$. In both cases all regular cyclic subgroups are conjugate in $G$.
\end{proof}

\section{Acknowledgments}

The second author was supported by a postdoctoral fellowship
funded by the Skirball Foundation via the Center of Advanced Studies
at the Ben-Gurion University of the Negev, and, in addition,
by the research grant with PI Dmitry Kerner. The first author was supported by the Israeli  Ministry of Absorption.

Both authors are grateful personally to Mikhail Klin who created the opportunity to work together. Without his efforts this cooperation would not led to writing this paper.

\end{document}